\newcommand{\showcomments}{no}    
\newcommand{\Z}{{\mathbb Z}}
\newcommand{\M}{{\mathsf M}}
\newcommand{\N}{{\mathsf N}}
\newcommand{\cG}{{\mathcal G}}
\newcommand{\oA}{{\overline A}}
\newcommand{\alp}{\alpha}
\newcommand{\bet}{\beta}
\newcommand{\gam}{\gamma}
\renewcommand{\phi}{\varphi}
\newcommand{\zet}{\zeta}
\theoremstyle{plain}
\newtheorem{lemma}{Lemma}
\newtheorem{proposition}{Proposition}
\newtheorem{theorem}{Theorem}
\theoremstyle{remark}
\newtheorem{example}{Example}
\newcommand{\refl}[1]{~\ref{l:#1}}
\newcommand{\refp}[1]{~\ref{p:#1}}
\newcommand{\refs}[1]{~\ref{s:#1}}
\newcommand{\reft}[1]{~\ref{t:#1}}
\newcommand{\refx}[1]{~\ref{x:#1}}
\newcommand{\refb}[1]{~\cite{b:#1}}
\newcommand{\refe}[1]{\eqref{e:#1}}
\newcommand{\seq}{\subseteq}
\newcommand{\stm}{\setminus}
\newcommand{\est}{\varnothing}
\renewcommand{\>}{\rangle}
\newcommand{\lfl}{\left\lfloor}
\newcommand{\rfl}{\right\rfloor}
\newcommand{\lfr}{\left\{}
\newcommand{\rfr}{\right\}}
\newcommand{\lpr}{\left(}
\newcommand{\rpr}{\right)}
\newcommand{\longc}{,\dotsc,}
\newcommand{\longp}{+\dotsb+}
\newcommand{\longu}{\cup\dotsb\cup}
\newcommand{\longop}{\oplus\dotsb\oplus}
\newcommand{\bt}{{\bf t}}
\newcommand{\Zn}[1][n]{\Z_5^{#1}}
\DeclareMathOperator{\rk}{rk}
\DeclareMathOperator{\diam}{diam}
\author{Vsevolod F. Lev}
\email{seva@math.haifa.ac.il}
\address{Department of Mathematics, The University of Haifa at Oranim,
  Tivon 36006, Israel}
\title[Stability result for sets with $3A\ne\Zn$]%
  {Stability result \\ for sets with $3A\ne\Zn$}
\subjclass[2010]{Primary: 11P70; secondary: 20K01, 05D99, 11B75.}
\keywords{Sumsets, stability, finite abelian groups, Kneser's theorem}
\begin{document}
\baselineskip = 16pt

\begin{abstract}
As an easy corollary of Kneser's Theorem, if $A$ is a subset of the
elementary abelian group $\Zn$ of density $5^{-n}|A|>0.4$, then $3A=\Zn$. We
establish the complementary stability result: if $5^{-n}|A|>0.3$ and
$3A\ne\Zn$, then $A$ is contained in a union of two cosets of an index-$5$
subgroup of $\Zn$. Here the density bound $0.3$ is sharp.

Our argument combines combinatorial reasoning with a somewhat non-standard
application of the character sum technique.
\end{abstract}

\maketitle

\section{Introduction}\label{s:intro}

For a subset $A$ of an (additively written) abelian group $G$, and a positive
integer $k$, denote by $kA$ the $k$-fold sumset of $A$:
  $$ kA := \{ a_1\longp a_k\colon a_1\longc a_k\in A \}. $$
How large can $A$ be given that $kA\ne G$? Assuming that $G$ is finite, let
  $$ \M_k(G) := \max \{ |A|\colon A\seq G,\ kA\ne G \}. $$
This quantity was introduced and completely determined by Bajnok in~\refb{b}.
The corresponding result, expressed in \refb{b} in a somewhat different
notation, can be easily restated in our present language.
\begin{theorem}[Bajnok {\cite[Theorem~6]{b:b}}]\label{t:bajnok}
For any finite abelian group $G$ and integer $k\ge 1$, writing $m:=|G|$, we
have
  $$ \M_k(G) = \max \lfr \lpr \lfl \frac{d-2}{k} \rfl +1 \rpr
                                         \frac{m}d \colon\, d\mid m \rfr $$
(where $\lfl\cdot\rfl$ is the floor function, and the maximum extends over
all divisors $d$ of $m$).
\end{theorem}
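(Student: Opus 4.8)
The plan is to prove the two inequalities separately: the bound $\M_k(G)\le\max_d(\cdots)$ by a short application of Kneser's theorem, and the matching lower bound by exhibiting an extremal set for each divisor $d\mid m$.

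For the upper bound, suppose $A\subseteq G$ is nonempty with $kA\ne G$, let $H:=\{g\in G:g+kA=kA\}$ be the period of $kA$, and put $d:=[G:H]$, so that $d\mid m$. Since $kA$ is a proper, $H$-periodic subset of $G$, its complement is a nonempty union of $H$-cosets, whence $|kA|\le m-|H|$; on the other hand Kneser's theorem applied to the $k$-fold sumset $kA$ gives $|kA|\ge k|A+H|-(k-1)|H|$, where $H$ is precisely the stabiliser just defined. Writing $|A+H|=r|H|$, these combine to $(kr-k+2)|H|\le m=d|H|$, hence $r\le\lfloor(d-2)/k\rfloor+1$, and since $|A|\le|A+H|=r\,m/d$ this is exactly the required bound. (The empty set, and the trivial case $|G|=1$, are handled directly; note that $H=G$ is impossible here, so $d\ge2$.)

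For the lower bound I would show $\M_k(G)\ge(\lfloor(d-2)/k\rfloor+1)\,m/d$ for \emph{every} divisor $d\mid m$. Fix such a $d$; since $m/d\mid m$ the group $G$ has a subgroup $H$ with $[G:H]=d$, and with $\pi\colon G\to K:=G/H$ the choice $A:=\pi^{-1}(\bar B)$ reduces the task to producing $\bar B\subseteq K$ with $k\bar B\ne K$ and $|\bar B|=t$, where $t:=\lfloor(d-2)/k\rfloor+1=\lceil(d-1)/k\rceil$ (then $kA=\pi^{-1}(k\bar B)\ne G$ and $|A|=t\,|H|$). If $H$ may be chosen so that $K$ is cyclic of order $d$ — in particular whenever $d\mid\exp(G)$ — take the interval $\bar B=\{0,1,\dots,t-1\}$, for which $k\bar B=\{0,1,\dots,k(t-1)\}$ has $k(t-1)+1\le d-1$ elements. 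In general, if $K$ has a proper subgroup $W$ with $|W|\ge t$, take $\bar B\subseteq W$ containing $0$, so that $k\bar B\subseteq W\subsetneq K$; and in the remaining case build $\bar B$ from the full $\pi'$-preimage of a suitable initial interval $\{0,1,\dots,a-1\}$ under a surjection $\pi'\colon K\to\Z_e$ onto a cyclic quotient of maximal order $e:=\exp K$, together with a subset $Z$ of one further coset, choosing $Z$ — by induction on $|K|$ applied to $\ker\pi'$ — so that its own $k$-fold sumset is a proper subset of that coset.

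The hard part is this last case: one must check simultaneously that the combined set has size exactly $t$ and that $k\bar B$ still misses a point of $K$, which comes down to controlling the unique "overflow" coset of $\pi'$ that $k\bar B$ can enter, and to verifying that the relevant inequalities (the size of $Z$ against $\lceil(|\ker\pi'|-1)/k\rceil$, and the position of the overflow coset against $e$) always hold — if necessary after a more careful choice of $H$, i.e.\ of the isomorphism type of $K$ among the order-$d$ quotients of $G$. Granting this, the two inequalities match (the divisor $d=1$ contributes $0$ to the maximum and may be ignored), which yields the stated formula.
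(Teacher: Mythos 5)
This statement is quoted from Bajnok's paper and is not proved in the present paper, so there is no ``paper's own proof'' to compare against; I can only assess your argument on its own terms.

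Your upper bound is correct and is the standard Kneser-based argument. One small point worth spelling out: the inequality $|kA|\ge k|A+H|-(k-1)|H|$ with $H:=\pi(kA)$ is not Kneser's theorem itself but its iterated form for a repeated summand. It follows cleanly because if $k\bar A$ is aperiodic in $G/H$ (where $\bar A$ is the image of $A$), then every intermediate $j\bar A$ is aperiodic as well (a period of $j\bar A$ would be a period of $k\bar A$), so one may apply the $2$-set Kneser inequality $k-1$ times with trivial stabiliser each time. With this justified, the chain $m-|H|\ge|kA|\ge k|A+H|-(k-1)|H|$, $|A+H|=r|H|$, and $|A|\le r\,m/d$ gives exactly the bound, and $H=G$ is indeed impossible.

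The lower bound, however, is genuinely incomplete, and you acknowledge as much. Two issues stand out. First, you set out to prove $\M_k(G)\ge\bigl(\lfloor(d-2)/k\rfloor+1\bigr)m/d$ for \emph{every} divisor $d$; this is stronger than needed (it suffices for a maximising $d$, since one can always discard elements), but it is also the source of most of the difficulty, because for $d=m$ the quotient $K=G/H$ is $G$ itself and cannot be replaced by a cyclic group, and for $k$ small relative to the least prime factor of $d$ the ``proper subgroup $W$ with $|W|\ge t$'' escape route is unavailable (e.g.\ already for $k=1$ and $d\ge3$). Second, in the remaining case your outline has the ``overflow'' coset carrying exactly $kZ$, and you need both $ka\le e-1$ so that wrap-around in $\Z_e$ does not swallow that coset, and $|Z|\le\M_k(\ker\pi')$ so that $kZ$ can be made proper; neither inequality is verified, and the first can fail for the naive choice of $a$ unless $a$ and $Z$ are balanced more carefully. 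As it stands the lower bound is a plausible sketch with a real gap, not a proof; to close it one must either carry out the indicated bookkeeping (with a specific choice of $H$ among the index-$d$ subgroups) or instead identify which $d$ attains the maximum and produce a construction only for that $d$.
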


Once $\M_k(G)$ is known, it is natural to investigate the associated
stability problem: what is the structure of those $A\seq G$ with $kA\ne G$
and $|A|$ close to $\M_k(G)$?

There are two ``trivial'' ways to construct large subsets $A\seq G$
satisfying $kA\ne G$. One is to simply remove elements from a yet larger
subset with this property; another is to fix a subgroup $H<G$ and a set
$\oA\seq G/H$ with $k\oA\ne G/H$, and define $A\seq G$ to be the full inverse
image of $\oA$ under the canonical homomorphism $G\to G/H$. It is thus
natural to consider as ``primitive'' those subsets $A\seq G$ with $kA\ne G$
which are maximal subject to this property and, in addition, cannot be
obtained by the lifting procedure just described.

To proceed, we recall that the \emph{period} of a subset $A\seq G$, denoted
$\pi(A)$ below, is the subgroup consisting of all elements $g\in G$ such that
$A+g=A$:
  $$ \pi(A) := \{g\in G\colon A+g=A \}. $$
Alternatively, $\pi(A)$ can be defined as the (unique) maximal subgroup such
that $A$ is a union of its cosets. The set $A$ is called aperiodic if
$\pi(A)=\{0\}$, and periodic otherwise.

It is readily seen that a set $A\seq G$ with $kA\ne G$ can be obtained by
lifting if and only if it is periodic. Accordingly, motivated by the
discussion above, for a finite abelian group $G$ and integer $k\ge 1$, we
define $\N_k(G)$ to be the largest size of an aperiodic subset $A\seq G$
satisfying $kA\ne G$ and maximal under this condition:
\begin{multline*}
  \N_k(G) := \max \{ |A|\colon A\seq G,\ \pi(A)=\{0\},\ \\
           kA\ne G\ \text{and $k(A\cup\{g\})=G$ for each $g\in G\stm A$} \}
\end{multline*}
(subject to the agreement that $\max\est=0$). Clearly, we have $\N_k(A)\le
\M_k(A)$, and if the inequality is strict (which is often the case), then
determining $\N_k(G)$ is, in fact, a stability problem; for if $kA\ne G$ and
$|A|>\N_k(G)$, then $A$ is contained in the set obtained by lifting a subset
$\oA\seq G/H$ with $k\oA\ne G/H$, for a proper subgroup $H<G$.

The quantity $\N_k(G)$ is quite a bit subtler than $\M_k(G)$ and indeed, the
latter can be easily read off from the former; specifically, it is not
difficult to show that
  $$ \M_k(G) = \max \{ |H|\cdot \N_k(G/H) \colon H\le G \}. $$

\ifthenelse{\equal{\showcomments}{yes}}{%
\begin{proof}[Proof (to be suppressed in the ``canonical'' version)]
First, we show that $\M_k(G)$ does not exceed the expression in the RHS. This
is obvious if $\M_k(G)=0$, and we thus assume that $\M_k(G)>0$. Find in this
case $A\seq G$ with $|A|=\M_k(G)$ and $kA\ne G$, and let $H:=\pi(A)$. Then
$\phi_H(A)\seq G/H$ is aperiodic, satisfies $k\phi_H(A)\ne G/H$, and
$k(\phi_H(A)\cup(g+H))=G/H$ whenever $g+H\notin\phi_H(A)$. (For the last
property notice that $k(\phi_H(A)\cup(g+H))=G/H$ would imply
$k(A\cup(g+H))\ne G$ and, as a result, $\M_k(G)\ge|A\cup(g+H)|>|A|$.) It
follows that $\N_k(G/H)\ge|\phi_H(A)|=\M_k(G)/|H|$.

Conversely, assuming that the RHS is positive, fix $H\le G$ and $\oA\in G/H$
so that $|\oA|=\N_k(G/H)$ and $k\oA\ne G/H$, and let $A:=\phi_H^{-1}(\oA)$.
Then $kA\ne G$ implying $\M_k(G)\ge|A|=|\oA||H|=|H|\cdot \N_k(G/H)$. It
follows that $\M_k(G)$ is at least as large as the RHS.
\end{proof}}
{} 


An invariant tightly related to $\N_k(G)$ was studied in \refb{kl}. To state
(the relevant part of) the results obtained there, following \refb{kl}, we
denote by $\diam^+(G)$ the smallest non-negative integer $k$ such that every
generating subset $A\seq G$ satisfies
 $\{0\}\cup A\cup\dotsb\cup kA=G$; that is, $k(A\cup\{0\})=G$. As shown in
\cite[Theorem~2.1]{b:kl}, if $G$ is of type $(m_1\longc m_r)$ with positive
integers $m_1\mid\dotsb\mid m_r$, then
\begin{equation}\label{e:diam}
  \diam^+(G) = \sum_{i=1}^r (m_i-1).
\end{equation}

\begin{theorem}[{\cite[Theorem~2.5 and Proposition 2.8]{b:kl}}]\label{t:kl2528}
For any finite abelian group $G$ and integer $k\ge 1$, we have
  $$ \N_k(G) \le \lfl\frac{|G|-2}k\rfl+1. $$
If $G$ is cyclic of order $|G|\ge k+2$ then, indeed, equality holds.
\end{theorem}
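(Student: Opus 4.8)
The plan is to treat the two assertions separately: the upper bound works for arbitrary $G$, while the equality statement exploits the cyclic structure.

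For the upper bound I would take an aperiodic set $A\seq G$ with $|A|=\N_k(G)$, so that $kA\ne G$ and $k(A\cup\{g\})=G$ for every $g\in G\stm A$ (the case $\N_k(G)=0$ being trivial). The key preliminary step is that $kA$ is itself aperiodic. Indeed, if $H:=\pi(kA)\ne\{0\}$, then $A$ is not a union of $H$-cosets because $\pi(A)=\{0\}$, so one may pick $g\in(A+H)\stm A$; then $A\cup\{g\}\seq A+H$, and hence $k(A\cup\{g\})\seq k(A+H)=kA+kH=kA\ne G$ (using $kH=H$ and $\pi(kA)\supseteq H$), which contradicts the maximality of $A$. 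Once $\pi(kA)=\{0\}$ is known, Kneser's theorem for $k$-fold sumsets yields $|kA|\ge k|A|-(k-1)$, while $kA\ne G$ forces $|kA|\le|G|-1$; combining the two and using that $|A|$ is an integer gives $|A|\le\lfl(|G|-2)/k\rfl+1$.

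For the equality statement, let $G$ be cyclic of order $n\ge k+2$. I would exhibit a concrete near-extremal set and then ``repair'' it, rather than verify maximality by hand. Put $t:=\lfl(n-2)/k\rfl$, so $t\ge1$, and $A_0:=\{0,1\longc t\}$; this is aperiodic (a proper arc) and $kA_0=\{0,1\longc kt\}\ne G$ since $kt\le n-2<n$. Now enlarge $A_0$ greedily to a set $A^*\supseteq A_0$ that is maximal subject to $kA^*\ne G$. The heart of the argument is that $A^*$ must be aperiodic, so that the upper bound applies to it. Suppose not: let $H:=\pi(A^*)\ne\{0\}$ and pass to $\bar G:=G/H$, of order $n/|H|$. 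Since $kA^*$ is $H$-periodic (being the $k$-fold sumset of an $H$-periodic set) and proper in $G$, its image $k\bar A^*$ is proper in $\bar G$; but $\bar A^*\supseteq\bar A_0$, the image of the arc $\{0\longc t\}$, and a short case check (according to whether $t+1\ge n/|H|$ or $t+1<n/|H|$) shows that $k\bar A^*\ne\bar G$ is possible only if $n/|H|\ge kt+2$. Since $kt\ge n-k-1$ and $|H|\ge2$, this forces $n\ge2(n-k+1)$, i.e.\ $n\le2k-2$; but then $t=1$, so $n/|H|\ge k+2$ gives $n\ge2(k+2)>2k-2$, a contradiction. Hence $A^*$ is aperiodic, and therefore $t+1=|A_0|\le|A^*|\le\N_k(G)\le\lfl(n-2)/k\rfl+1=t+1$, so equality holds all the way through.

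The step that I expect to demand the most care is exactly this aperiodicity of $A^*$: one has to pin down precisely when the $k$-fold sumset of the image of the arc $\{0\longc t\}$ in a quotient of $G$ is still a proper subset, and it is here that the hypothesis $n\ge k+2$ and the exact value $t=\lfl(n-2)/k\rfl$ get used. The analogous subtle point in the upper bound is the implication $\pi(A)=\{0\}\Rightarrow\pi(kA)=\{0\}$; everything past that is a direct application of Kneser's inequality together with the trivial estimate $|kA|\le|G|-1$.
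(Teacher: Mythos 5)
Your proof is correct, and it is genuinely different from what the paper does. The paper does not prove Theorem~\ref{t:kl2528} from first principles at all: the result is imported from~\cite{b:kl} (Theorem~2.5 and Proposition~2.8 there), and the only thing the paper itself establishes is the translation lemma in Section~\ref{s:kl}, which shows that the invariant $\bt^+_\rho(G)$ of~\cite{b:kl} agrees with $\N_k(G)$ (apart from the prime-order exceptional case). You instead give a self-contained argument. For the upper bound, your key observation that an aperiodic maximal set $A$ with $kA\ne G$ must have $\pi(kA)=\{0\}$ --- because otherwise one could adjoin a point $g\in(A+H)\setminus A$ with $H:=\pi(kA)$ and still have $k(A\cup\{g\})\subseteq kA+H=kA$ --- is exactly the right lever; combined with the iterated Kneser bound $|kA|\ge k|A|-(k-1)|\pi(kA)|$ and the trivial $|kA|\le|G|-1$, the floor bound drops out. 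For the equality, taking the arc $A_0=\{0,\dots,t\}$ with $t=\lfloor(n-2)/k\rfloor$, enlarging greedily to a maximal $A^*$ with $kA^*\ne G$, and then showing $A^*$ must be aperiodic via the quotient-by-$H$ case analysis (which correctly forces $n/|H|\ge kt+2$, hence $n\le 2k-2$, hence $t=1$, hence $n\ge 2k+4$, a contradiction) is sound. The one place your write-up glosses over slightly is the ``Kneser's theorem for $k$-fold sumsets'': one should note that $\pi((k-1)A)\le\pi(kA)$ in order to justify the induction giving $|kA|\ge k|A|-(k-1)|\pi(kA)|$, since the paper's Theorem~\ref{t:Kneser} is stated only for two summands. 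With that made explicit, your proof is complete and has the advantage of being self-contained, at the cost of redoing work that the paper simply cites.
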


\begin{theorem}[{\cite[Theorem~2.4]{b:kl}}]\label{t:kl24}
For any finite abelian group $G$ and integer $k\ge 1$, denoting by $\rk(G)$
the smallest number of generators of $G$, we have
  $$ \N_k(G) = \begin{cases}
       |G|-1 \                &\text{if}\ k=1, \\
       \lfl\frac12\,|G|\rfl \ &\text{if}\ k=2<\diam^+(G), \\
       \rk(G)+1 \             &\text{if}\ k=\diam^+(G)-1, \\
       1 \             &\text{if}\ k\ge\diam^+(G)\ \text{and $|G|$ is prime}, \\
       0 \             &\text{if}\ k\ge\diam^+(G)\ \text{and $|G|$ is composite}.
             \end{cases} $$
\end{theorem}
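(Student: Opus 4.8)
The plan is to treat four regimes separately — $k=1$; $k\ge\diam^+(G)$; $k=2<\diam^+(G)$; and $k=\diam^+(G)-1$ — since the stated formula covers exactly these. In each regime the upper bound on $\N_k(G)$ will come either from Theorem\reft{kl2528} (when $k\le 2$) or directly from the defining property of $\diam^+$ (when $k\ge\diam^+(G)-1$), while the lower bound comes from an explicit extremal set. I use freely that ``$\pi(A)=\{0\}$'', ``$kA\ne G$'', and ``maximality'' are all invariant under translation, so that any candidate $A$ may be taken to contain $0$ (note that $\est$ is never a candidate, since $\pi(\est)=G$).

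\emph{Easy regimes.} For $k=1$, $A$ is a candidate iff $A\ne G$, $\pi(A)=\{0\}$, and $|G\stm A|\le 1$; hence $|A|=|G|-1$, and any set whose complement is a single point is aperiodic, so $\N_1(G)=|G|-1$. For $k\ge\diam^+(G)$, take a candidate $A$ with $0\in A$: if $A$ generated $G$ then $kA=k(A\cup\{0\})=G$ by the defining property of $\diam^+$, contradicting $kA\ne G$; hence $H:=\<A\>\subsetneq G$, and as adjoining any $g\in H\stm A$ leaves the $k$-fold sumset inside $H$, maximality forces $A=H$, so (a subgroup with trivial period being $\{0\}$) $A=\{0\}$. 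Finally $\{0\}$ is a candidate exactly when $k(\{0,g\})=G$ for all $g\ne 0$; since $k\ge\diam^+(G)\ge\exp(G)-1$ makes $k(\{0,g\})=\<g\>$, this says every nonzero element generates $G$, i.e.\ $|G|$ is prime. Thus $\N_k(G)=1$ or $0$ according as $|G|$ is prime or composite.

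\emph{The regime $k=2<\diam^+(G)$.} Theorem\reft{kl2528} gives $\N_2(G)\le\lfl(|G|-2)/2\rfl+1=\lfl|G|/2\rfl$, and it remains to exhibit an aperiodic, maximal $A$ with $2A\ne G$ and $|A|=\lfl|G|/2\rfl$. Since $\diam^+(G)\ge 3$ excludes only $\{0\},\Z_2,\Z_3,\Z_2^2$, I split into three sub-cases. If $G$ is cyclic of order $m\ge 4$, take the interval $A=\{1,\dots,\lfl m/2\rfl\}$: it is aperiodic, $2A$ omits a residue, and for $g\notin A$ the sumset $2(A\cup\{g\})$ exhausts $\Z_m$ once the summand $2g$ is thrown in. If $G$ is non-cyclic with $|G|$ even (so $|G|\ge 8$), take an index-$2$ subgroup $H$ and put $A=\bigl((g_0+H)\stm\{x\}\bigr)\cup\{y\}$ with $g_0\notin H$, $x\in g_0+H$, $y\in H\stm\{0\}$, using $|H|\ge 4$ to verify aperiodicity and maximality. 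If $G$ is non-cyclic with $|G|$ odd, take $H$ with $G/H$ cyclic, lift the corresponding interval to $\phi^{-1}(I)$, and enlarge it by a set $S\seq H$ with $S+S\ne H$ so the total size is $\lfl|G|/2\rfl$; such an $S$ exists by an easy induction on $|H|$, and the three required properties are then checked fibrewise. This part is routine but genuinely case-by-case.

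\emph{The regime $k=\diam^+(G)-1$.} Write $r=\rk(G)$, let $(m_1,\dots,m_r)$ be the type of $G$, and $D=\diam^+(G)=\sum_i(m_i-1)$. For the lower bound, fix generators $e_1,\dots,e_r$ with $e_i$ of order $m_i$ and take $A=\{0,e_1,\dots,e_r\}$: then $(D-1)A$ omits $\sum_i(m_i-1)e_i$, which needs $D$ summands; $A$ is aperiodic, since the only subgroups contained in $A$ are $\{0\}$ and, when some $m_i=2$, $\{0,e_i\}$, and no other coset of the latter fits inside $A$; and $A$ is maximal, because for $g\notin A$, writing $g=\sum_i c_ie_i$ with $0\le c_i<m_i$ one has $\sum_i c_i\ge 2$, so every element of $G$ is reached from $A\cup\{g\}$ in at most $D-1$ summands, using $g$ once. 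Hence $\N_{D-1}(G)\ge r+1$. The reverse inequality is the crux: one must show that every aperiodic $A$ with $(D-1)A\ne G$ and $|A|\ge r+2$ fails to be maximal. The argument from the case $k\ge\diam^+(G)$ showing that a non-generating candidate equals $\{0\}$ applies verbatim (it does not use $k\ge\diam^+$), so $|A|\ge r+2$ forces $A$, translated so $0\in A$, to generate $G$; and then the whole theorem reduces to the following sharpening of \refe{diam}: \emph{if $B$ generates $G$, $0\in B$, and $|B|\ge r+2$, then $(D-1)B=G$.} I expect this to be the main obstacle. The route I would take: extract from $B$ a minimal generating set $b_1,\dots,b_r$ together with one further element $b$, reduce (as in the derivation of\refe{diam}) the reaching of an arbitrary $v\in G$ to reaching it from $\{0,b_1,\dots,b_r\}$, and then produce a saving of one summand by using $b$ once in place of two of the $b_i$'s — the delicate point being that $b_1,\dots,b_r$ need not form an independent basis, so the saving must be extracted by a telescoping/exchange argument rather than read off coordinates.
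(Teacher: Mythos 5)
The paper does not prove Theorem\reft{kl24} at all: it is quoted verbatim from \cite[Theorem~2.4]{b:kl}, and the only work the paper does is the translation lemma of Section\refs{kl}, which shows that $\N_k(G)=\bt^+_{k+1}(G)$ except in the prime-order regime where $\bt^+_{k+1}(G)=0$ and $\N_k(G)=1$. So the paper's ``proof'' consists of (a) rewriting the maximality condition $k(A\cup\{g\})=G$ for all $g\in G\stm A$ into the form $(k+1-1)(A\cup\{0\})\ne G$ with maximality, observing that $g=0$ forces $0\in A$ so the translation is exact, and (b) invoking the cited theorem about $\bt^+_{k+1}(G)$. Your proposal, by contrast, attempts a from-scratch proof of the underlying combinatorial statement, which is not what the paper does; you are essentially reproving the content of \refb{kl} rather than the reduction the present paper supplies.

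Evaluated on its own terms, your argument is sound in the regimes $k=1$ and $k\ge\diam^+(G)$ (the reduction of a non-generating maximal aperiodic candidate to $\{0\}$, and the use of $\diam^+(G)\ge\exp(G)-1$ to compute $k\{0,g\}=\<g\>$, are both correct), and the $k=2$ construction is plausible if unverified in detail. But there is a genuine gap in the crux case $k=\diam^+(G)-1$, and you flag it yourself. The lower bound via $A=\{0,e_1,\ldots,e_r\}$ is fine, but the upper bound is reduced to the unproved claim that any generating set $B\ni 0$ of size $\ge\rk(G)+2$ satisfies $(\diam^+(G)-1)B=G$, and this is precisely the nontrivial content of \cite[Theorem~2.4]{b:kl}. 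Your ``telescoping/exchange'' sketch is not an argument: because $b_1,\ldots,b_r$ need not be independent, ``using $b$ once in place of two of the $b_i$'s'' does not obviously save a summand for the single extremal target $\sum_i(m_i-1)e_i$, and establishing that it always does is exactly where the difficulty of the cited theorem lives. As written, the proposal proves the easy cases and the lower bounds but leaves the main upper bound as an acknowledged conjecture, so it cannot stand as a proof of the statement.
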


\begin{theorem}[{\cite[Theorem~2.7]{b:kl}}]\label{t:kl27}
For any finite abelian group $G$ with $\diam^+(G)\ge 4$, we have
  $$ \N_3(G) = \begin{cases}
                \frac13\,|G| \ &\text{if $3$ divides $|G|$}, \\
                \frac13\,(|G|-1) \ &\text{if every divisor of $|G|$
                                        is congruent to $1$ modulo $3$}.
              \end {cases} $$
\end{theorem}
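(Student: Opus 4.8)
I would split the argument into the upper and lower bounds. The upper bound is essentially free, and all the work — and, in particular, the one genuinely delicate step — lies in the lower bound, namely in checking maximality of the sets one constructs.

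\emph{Upper bound.} Write $m:=|G|$. By \reft{kl2528} we have $\N_3(G)\le\lfl(m-2)/3\rfl+1$, and it only remains to read off the floor: if $3\mid m$ then $\lfl(m-2)/3\rfl+1=m/3$, while if $m\equiv1\pmod3$ — which is forced as soon as every divisor of $m$ is $\equiv1\pmod3$, since $m$ divides $m$ — then $\lfl(m-2)/3\rfl+1=(m-1)/3$. Both values agree with the claim, so this half uses only the residue of $m$ modulo $3$.

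\emph{Lower bound: the constructions.} In each case one must exhibit an aperiodic $A\seq G$ with $3A\ne G$, of the stated size, maximal subject to $3A\ne G$. If $G$ is cyclic, take an interval: $A:=\{0,1\longc m/3-1\}$ when $3\mid m$, and $A:=\{0,1\longc(m-4)/3\}$ when $m\equiv1\pmod3$; then $3A$ is again an interval, omitting $2$, resp.\ $3$, consecutive points, and a proper interval in a cyclic group is aperiodic. If $G$ is not cyclic and $3\mid m$, fix an index-$3$ subgroup $H\le G$ (so $|H|=m/3\ge3$, as $G$ is non-cyclic) with quotient map $\phi\colon G\to G/H\cong\Z_3$, and for a careful choice of $x\in\phi^{-1}(1)$ and $y\in\phi^{-1}(2)$ put $A:=(\phi^{-1}(1)\stm\{x\})\cup\{y\}$, of size $m/3$; the only triples from $A$ landing in $\phi^{-1}(2)$ have the form $c+2y$ with $c\in\phi^{-1}(1)\stm\{x\}$, so $3A$ omits a point of $\phi^{-1}(2)$, while the $\phi$-profile of $A$ (the value $1$ with multiplicity $|H|-1$, the value $2$ once) together with the uniqueness of $y$ forces $\pi(A)=\{0\}$. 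Finally, if $G$ is not cyclic and every divisor of $m$ is $\equiv1\pmod3$ (so $m$ is odd and $3\nmid m$), choose a subgroup $K<G$ with $G/K\cong\Z_d$ cyclic of prime order (necessarily $d\equiv1\pmod3$), let $\psi\colon G\to G/K$ be the quotient, take an interval $\overline B\seq\Z_d$ with $3\overline B\ne\Z_d$ and $|\overline B|=(d-1)/3$, and set $A:=\psi^{-1}(\overline B)\cup S$, where $S$ is a translate of an analogous extremal subset of $K$ placed inside a single coset $\psi^{-1}(c)$ for a suitable $c$ (with $c$, $2c$, $3c$ avoiding $\overline B$, $2\overline B$, $3\overline B$ respectively); a short count gives $|A|=(d-1)|K|/3+(|K|-1)/3=(m-1)/3$, induction on $|G|$ shows that $3A$ omits exactly the three points that $3S$ misses inside $\psi^{-1}(3c)$, and aperiodicity again follows from the profile over $G/K$ together with the aperiodicity of $S$ inside its coset.

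\emph{The main obstacle: maximality.} What remains — and where I expect most of the proof to go — is to show that $3(A\cup\{z\})=G$ for every $z\in G\stm A$. One writes
  $$ 3(A\cup\{z\}) = 3A\,\cup\,(2A+z)\,\cup\,(A+2z)\,\cup\,\{3z\}, $$
and, knowing that $3A$ omits only a bounded set $E$ of at most three points (confined to one coset of $H$ or $K$ in the non-cyclic constructions), it suffices to verify that for every admissible $z$ at least one of $2A+z$, $A+2z$, $\{3z\}$ meets each point of $E$. I would do this by bounding $|2A|$ from below — and likewise the sizes of the projections of $2A$ to the cyclic quotient — via Cauchy--Davenport/Kneser; this shows the translates $2A+z$ already cover $E$ except for $z$ ranging over a small ``boundary'' set, for which the terms $A+2z$ and $\{3z\}$ are then inspected directly, alongside a finite list of small groups ($\Z_6,\Z_9,\Z_3^2,\Z_{12},\Z_7,\dotsc$) checked by hand. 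This is also the step in which the placement of the perturbation matters: a careless choice of $x,y$ (or of $S$) can produce an aperiodic $A$ with $3A\ne G$ of the right size that nonetheless fails to be maximal — for instance $\{1,5\}\seq\Z_6$ — so the boundary has to be positioned with some care.
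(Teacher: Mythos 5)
The paper does not re-prove this statement at all: Theorem~\reft{kl27} is a direct citation of \cite[Theorem~2.7]{b:kl}, and the only work the present paper does is in Section~\refs{kl}, where it proves the bookkeeping lemma $\bt^+_{k+1}(G)=\N_k(G)$ (outside the prime-order exceptional case) so that the results of \refb{kl} can be read in the notation $\N_k(G)$. Your proposal instead attempts a from-scratch re-proof, so the two routes are genuinely different: the paper's route is a short notational bridge that outsources the hard work to \refb{kl}, while yours would make the theorem self-contained at the cost of reconstructing the extremal-set analysis.

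On the substance of your attempt: the upper bound is correct and clean, being exactly the $k=3$ specialization of Theorem~\reft{kl2528} plus a two-line computation of the floor. For the lower bound in the cyclic case you do not in fact need a new construction, since Theorem~\reft{kl2528} already asserts equality for cyclic $G$ with $|G|\ge k+2$, which under $\diam^+(G)\ge4$ covers every cyclic group in scope; your interval construction is therefore redundant. Your non-cyclic constructions (the index-$3$ quotient with profile $(0,|H|-1,1)$ when $3\mid|G|$, and the recursive cyclic-quotient construction otherwise) look consistent with the kind of examples the paper itself displays in Example~\refx{decomp}, and the size and aperiodicity counts are fine.

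The genuine gap is the one you flag yourself: maximality. Showing $3(A\cup\{z\})=G$ for every $z\notin A$ is not a formality, and the Cauchy--Davenport/Kneser outline plus ``finite list of small groups'' is a plan, not a proof. Your own caveat $\{1,5\}\subseteq\Z_6$ (where $3(\{1,5\}\cup\{3\})$ is still contained in the odd coset) shows precisely why the perturbation has to be placed with care, and your sketch does not pin down what ``careful choice of $x,y$'' or ``suitable $c$'' means, nor verify that such a choice always exists for every $G$ with $\diam^+(G)\ge4$. Until that step is carried out, the lower bound is not established. Given that the paper itself simply invokes \cite[Theorem~2.7]{b:kl}, the most economical fix is to do the same and reserve your energy for the translation lemma of Section~\refs{kl}; if you do want a self-contained proof, the maximality verification is where essentially all of the remaining work lies.
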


In Section~\refs{kl}, we explain exactly how
Theorems~\reft{kl2528}--\!\reft{kl27} follow from the results of \refb{kl}.

Theorem~\reft{kl27} is easy to extend to show that, in fact, the equality
\begin{equation*}\label{e:cyclic3}
  \N_3(G) = \frac13\,(|G|-1)
\end{equation*}
holds true for any finite abelian group $G$ decomposable into a direct sum of
its cyclic subgroups of orders congruent to $1$ modulo $3$. Here the upper
bound is an immediate consequence of Theorem~\reft{kl2528}, while a
construction matching this bound is as follows.
\begin{example}\label{x:decomp}
Suppose that $G=G_1\longop G_n$, where $G_1\longc G_n\le G$ are cyclic with
$|G_i|\equiv 1\pmod 3$, for each $i\in[1,n]$. Write $|G_1|=3m+1$ and let
$H:=G_2\longop G_n$ so that $G=G_1\oplus H$. Assuming that
$\N_3(H)=\frac13(|H|-1)$, find an aperiodic subset $S\seq H$ with
$|S|=\frac13(|H|-1)$, such that $3S\ne H$ and $S$ is maximal subject to this
last condition. (If $n=1$ and $H$ is the trivial group, then take $S=\est$.)
Fix a generator $e\in G_1$, and consider the set
  $$ A := H \cup (e+H) \longu ((m-1)e+H) \cup (me+S) \seq G. $$
It is readily seen that $3A\ne G$ and $A$ is maximal with this property.
Furthermore,
  $$ |A| = m|H| + |S| = \frac13\,(|G|-1) $$
implying $\gcd(|A|,|G|)=1$, whence $A$ is aperiodic. As a result,
$\N_3(G)\ge|A|=\frac13(|G|-1)$.

Applying this construction recursively, we conclude that
$\N_3(G)\ge\frac13(|G|-1)$ whenever $G$ is a direct sum of its cyclic
subgroups of orders congruent to $1$ modulo $3$.
\end{example}

In contrast with Theorem~\reft{kl24} establishing the values of $\N_1(G)$ and
$\N_2(G)$ for all finite abelian groups $G$, Theorem~\reft{kl27} and the
remark following it address certain particular groups only, and it is by far
not obvious whether $\N_3(G)$ can be found explicitly in the general case. In
this situation it is interesting to investigate at least the most ``common''
families of groups not covered by Theorem~\reft{kl27} and
Example~\refx{decomp}, such as the homocyclic groups $\Z_m^n$ with
 $m\equiv 2\pmod 3$.

An important result of Davydov and Tombak \refb{dt}, well known for its
applications in coding theory and finite geometries, settles the problem for
the groups $\Z_2^n$; stated in our terms, it reads as
  $$ \N_3(\Z_2^n) = 2^{n-2}+1,\quad n\ge 4. $$
The goal of this paper is to resolve the next major open case, determining
the value of $\N_3(\Zn)$. To state our main result, we need two more
observations.

\begin{example}\label{x:11}
If $A\subset\Zn$ is a union of two cosets of a subgroup of index $5$, then
$3A\ne\Zn$, and $A$ is maximal with this property: that is,
$3(A\cup\{g\})=\Zn$ for every element $g\in\Zn\stm A$.
\end{example}
We omit the (straightforward) verification.

\begin{example}\label{x:22}
Let $n\ge 2$ be an integer. Fix a subgroup $H<\Zn$ of index $5$, an element
$e\in\Zn$ with $\Zn=H\oplus\<e\>$, and a set $S\seq H$ such that
$|S|=(|H|-1)/2$ and $0\notin 2S$. Finally, let
  $$ A := (H\stm\{0\}) \cup (e+S) \cup \{2e\}. $$
We have then $|A|=(3\cdot 5^{n-1}-1)/2$, and hence $A$ is aperiodic. Also, it
is easily verified that $3A=\Zn\stm\{4e\}$, and that $4e\in 3(A\cup\{g\})$
for any $g\in\Zn\stm A$.
\end{example}

The last example shows that
  $$ \N_3(\Zn) \ge \frac12\,(3\cdot 5^{n-1}-1),\quad n\ge 2. $$
With this estimate in view, we can eventually state the main result of our
paper.
\begin{theorem}\label{t:mainn}
Suppose that $n$ is a positive integer, and $A\seq\Zn$ satisfies
 $3A\ne\Zn$. If $|A|>3\cdot 5^{n-1}/2$, then $A$ is contained in a union
of two cosets of a subgroup of index~$5$. Consequently, in view of
Theorem~\reft{kl2528} and Example~\refx{22},
  $$ \N_3(\Zn) = \begin{cases}
         2\ &\text{if}\ n=1, \\
         \frac12\,(3\cdot 5^{n-1}-1)\ &\text{if}\ n\ge 2.
                  \end{cases}
  $$
\end{theorem}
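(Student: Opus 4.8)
The plan is to argue by contradiction: assume $A\seq\Zn$ with $3A\ne\Zn$ and $|A|>3\cdot5^{n-1}/2$, but $A$ is not contained in a union of two cosets of an index-$5$ subgroup. We may clearly assume $A$ is maximal subject to $3A\ne\Zn$, and (after translating) that $0\in A$; in particular $A$ generates $\Zn$, for otherwise $A$ lies in a proper subgroup, hence in a single coset of an index-$5$ subgroup. Since $3A\ne\Zn$, fix $c\in\Zn\stm 3A$. The engine of the argument will be a careful study of the ``slices'' of $A$ along a well-chosen hyperplane direction: pick a surjective homomorphism $\phi\colon\Zn\to\Z_5$, write $A_i:=A\cap\phi^{-1}(i)$ for $i\in\Z_5$, and translate each $A_i$ into the common hyperplane $H:=\ker\phi\cong\Zn[n-1]$ to obtain sets $B_0\longc B_4\seq H$. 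The condition $c\notin 3A$ forces, for every way of writing $\phi(c)=i+j+k$ in $\Z_5$, that $(B_i+B_j+B_k)$ avoids the corresponding translate of $c$ in $H$; summing a weighted count of these constraints over the $25$ directions through $\phi(c)$ is where the "somewhat non-standard" character-sum input enters.

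The key steps, in order, would be: (i) \textbf{Reduction to the hyperplane.} Choose $\phi$ so that the fibre sizes $|B_i|$ are as unbalanced as possible, and record that $\sum_i|B_i|=|A|>3\cdot5^{n-1}/2$, so on average three of the five slices have density exceeding $0.3$ in $H$. (ii) \textbf{Density dichotomy via Kneser / induction.} For those slices $B_i$ of density $>0.3$ that generate $H$, invoke the inductive hypothesis (Theorem~\ref{t:mainn} for $n-1$): either some triple sum $B_i+B_j+B_k=H$ — which, combined with the sumset structure in the $\Z_5$-direction, would tend to give $c\in 3A$ unless the "missing" coset structure is very rigid — or each such slice is confined to a union of two cosets of an index-$5$ subgroup $H'<H$ of $\Zn$. (iii) \textbf{The character-sum count.} When the slices are large but no triple fills $H$, apply the Fourier/character-sum technique on $H$ to the incidence count $\sum_{\chi}\widehat{B_i}(\chi)\widehat{B_j}(\chi)\widehat{B_k}(\chi)\overline{\chi(c')}$: the main term is $|B_i||B_j||B_k|/|H|$, which by the density bounds strictly exceeds the total $\ell^1$-contribution of the nontrivial characters unless many Fourier coefficients are abnormally large, i.e. unless $A$ correlates strongly with a single index-$5$ subgroup. (iv) \textbf{Synthesis.} Combine (ii) and (iii): the only way to avoid $c\in 3A$ is for all nonempty slices to sit inside cosets of one common index-$5$ subgroup $K<\Zn$, and then a short combinatorial argument on $\Zn/K\cong\Z_5$ (using that a subset $T\subseteq\Z_5$ with $3T\ne\Z_5$ has $|T|\le 2$) shows $A$ meets at most two cosets of $K$ — the desired contradiction. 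The base case $n=1$ is immediate since $|A|>3/2$ means $|A|\ge 2$, and $3T=\Z_5$ once $|T|\ge 3$, while $|A|=2$ already realizes "a union of two cosets of the index-$5$ subgroup $\{0\}$".

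I expect the main obstacle to be step (iii) in the regime where exactly three slices are nonempty with densities near $0.3$ and the remaining two slices contribute little: there the main term $|B_i||B_j||B_k|/|H|$ is only marginally larger than the trivial lower bound one needs, so the character-sum estimate must be sharp and will require isolating a single dominant nontrivial character (equivalently, a single index-$5$ subgroup) and treating its contribution by hand rather than through a crude $\ell^1$ bound — this is presumably the "non-standard application" the abstract refers to. A secondary technical point is handling slices $B_i$ that do \emph{not} generate $H$, since the inductive hypothesis does not apply verbatim; these must be absorbed by passing to the subgroup they generate and re-running the density bookkeeping there, and one has to check that the resulting case does not leak extra configurations. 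Finally, the sharpness claim is already witnessed by Example~\ref{x:22}, so once the inequality is proved the stated formula for $\N_3(\Zn)$ follows by combining it with Theorem~\ref{t:kl2528} exactly as indicated.
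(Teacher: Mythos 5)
Your plan is genuinely different from the paper's and, as written, has a gap that I do not think is repairable without replacing the central idea. You propose to slice along a homomorphism $\phi\colon\Zn\to\Z_5$, translate the slices into $H=\ker\phi$, and then apply Theorem\reft{mainn} inductively to those slices $B_i$ of density $>0.3$. But the inductive hypothesis requires $3B_i\ne H$, and the assumption $c\notin 3A$ only constrains $3B_i$ for the single residue $i$ with $3i\equiv\phi(c)\pmod 5$; for all other $i$ you have no reason to expect $3B_i\ne H$, and indeed in Example\refx{22} several slices satisfy $3B_i=H$. So step (ii) does not get off the ground. Your step (iii) is a standard triple-convolution character sum on $H$ with main term $|B_i||B_j||B_k|/|H|$ plus an error you hope to control by isolating one dominant character; you already flag that the marginal regime (three slices of density near $0.3$) makes the requisite $\ell^1$ bound hopelessly lossy, and you offer no mechanism to fix it. The paper does not perform any such convolution estimate, nor any induction on $n$, nor does it choose $\phi$ to maximize unbalance.

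What the paper actually does is in a different order and uses a different Fourier trick. It works on the whole group $\Zn$: from $0\notin 3A$ one has $\sum_\chi(\hat1_A(\chi))^3=0$. For each character it chooses a cube root of unity $\zet(\chi)$ so that $z(\chi):=-\hat1_A(\chi)\zet(\chi)$ has nonnegative real part, uses the elementary inequality $\Re(z^3)\le|z|^2\Re(z)$ valid for $\Re z\ge 0$, and compares with Parseval to produce a single non-principal $\chi$ with $\Re(-\hat1_A(\chi)\zet(\chi))\ge\alp^2/(1-\alp)$. This identifies the distinguished index-$5$ subgroup $F=\ker\chi$. Separately, purely combinatorial Propositions\refp{1}--\refp{3} (using the pigeonhole Lemma\refl{pigeon}, Kneser's theorem, the union Lemma\refl{Kneser}, and the doubling Lemma\refl{0.25}) pin down the possible distribution of slice densities of $A$ across $F$-cosets under the hypotheses $\alp>0.3$ and $3A\ne\Zn$: either $A$ is already in two cosets of some index-$5$ subgroup, or all slice densities are below $0.5$ with at most one above $0.4$. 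A short linear program then shows these density constraints contradict the Fourier lower bound. None of this requires induction, and the Fourier step is used only to \emph{locate} a subgroup, not to estimate a convolution. Your proposal would need to be substantially restructured along these lines; the single sound element is the observation that sharpness is witnessed by Example\refx{22} and the closing formula follows from Theorem\reft{kl2528}, which matches the paper.
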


We collect several basic results used in the proof of Theorem~\reft{mainn} in
the next section; the proof itself is presented in Section~\refs{proof}. In
Section~\refs{kl} we explain exactly how
Theorems~\reft{kl2528}--\!\reft{kl27} follow from the results of \refb{kl}.

In conclusion, we remark that any finite abelian group not addressed in
Example~\refx{decomp} has a direct-summand subgroup of order congruent to $2$
modulo $3$, and Example~\refx{22} generalizes onto ``most'' of such groups,
as follows.
\begin{example}
Suppose that the finite abelian group $G$ has a direct-summand subgroup
$G_1<G$ of order $|G_1|=3m+2$ with integer $m\ge 1$, and find a generator
$e\in G_1$ and a subgroup $H<G$ such that $G=G_1\oplus H$.

Assuming first that $|H|$ is odd, fix a subset $S\seq H$ with $0\notin 2S$
and $|S|=\frac12\,(|H|-1)$, and let
\begin{multline*}
  A := H \cup (e+H) \longu \big((m-2)e+H\big) \\
               \cup \big((m-1)e+(H\stm\{0\})\big) \cup (me+S) \cup \{(m+1)e\}.
\end{multline*}
A simple verification shows that $(3m+1)e\notin 3A$ and $A$ is maximal with
this property. Furthermore, since there is a unique $H$-coset containing
exactly $|H|-1$ elements of $A$, we have $\pi(A)\le H$, and since there is an
$H$-coset containing exactly one element of $A$, we actually have
$\pi(A)=\{0\}$. Therefore,
  $$ \N_3(G) \ge |A| = (m|H|-1) + |S| + 1
                                    = \frac{2m+1}{6m+4}\,|G|-\frac12. $$

Assuming now that $|H|$ is even, fix arbitrarily an element $g\in H$ not
representable in the form $g=2h$ with $h\in H$, find a subset $S\seq H$ with
$g\notin 2S$ and $|S|=\frac12|H|$, and let
\begin{multline*}
  A := H \cup (e+H) \longu \big((m-2)e+H\big) \\
               \cup \big((m-1)e+(H\stm\{g\})\big) \cup (me+S) \cup \{(m+1)e\}.
\end{multline*}
We have then $(3m+1)e+g\notin 3A$, and $A$ is maximal with this property.
Also, it is not difficult to see that $\pi(A)=\{0\}$. Hence,
  $$ \N_3(G) \ge |A| = (m|H|-1) + |S| + 1 = \frac{2m+1}{6m+4}\,|G|. $$
\end{example}

\section{Auxiliary Results}\label{s:aux}

For subsets $A$ and $B$ of an abelian group, we write $A+B:=\{a+b\colon a\in
A,\ b\in B\}$.

The following immediate corollary from the pigeonhole principle will be used
repeatedly.
\begin{lemma}\label{l:pigeon}
If $A$ and $B$ are subsets of a finite abelian group $G$ such that
 $A+B\ne G$, then $|A|+|B|\le|G|$.
\end{lemma}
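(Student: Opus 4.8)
The plan is to exploit the hypothesis $A+B\ne G$ by producing a translate of $-B$ that is disjoint from $A$, and then apply the trivial counting bound that two disjoint subsets of $G$ have sizes summing to at most $|G|$.

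Concretely, since $A+B\ne G$, I would first fix an element $g\in G\stm(A+B)$. The defining property of $g$ is that $a+b\ne g$ for all $a\in A$ and $b\in B$; rearranging, $a\ne g-b$ for all such $a,b$. Hence the translated set $g-B:=\{g-b\colon b\in B\}$ satisfies $A\cap(g-B)=\est$: if some $a\in A$ lay in $g-B$, say $a=g-b$ with $b\in B$, then $a+b=g\in A+B$, a contradiction.

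It remains to count. The sets $A$ and $g-B$ are disjoint subsets of the finite set $G$, so $|A|+|g-B|\le|G|$. Since translation by $g$ (followed by negation) is a bijection of $G$, we have $|g-B|=|B|$, and therefore $|A|+|B|\le|G|$, as claimed.

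There is essentially no obstacle here: the only point requiring care is the bookkeeping of signs in passing from ``$a+b=g$ is impossible'' to ``$A$ and $g-B$ are disjoint'', and the observation that in an abelian group the map $b\mapsto g-b$ is a bijection (so that it preserves cardinality) — both of which are routine. No appeal to Kneser's theorem or any deeper structural result is needed, which is precisely why this statement is isolated as a self-contained lemma to be quoted repeatedly in the sequel.
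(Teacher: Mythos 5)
Your proof is correct and is exactly the standard pigeonhole argument the paper has in mind (it states the lemma as an ``immediate corollary from the pigeonhole principle'' without writing it out): for $g\notin A+B$ the sets $A$ and $g-B$ are disjoint, forcing $|A|+|B|\le|G|$. Nothing further is needed.
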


An important tool utilized in our argument is the following result that we
will refer to below as \emph{Kneser's Theorem}.
\begin{theorem}[\cite{b:kn1,b:kn2}]\label{t:Kneser}
If $A$ and $B$ are finite subsets of an abelian group, then
  $$ |A+B| \ge |A| + |B| - |\pi(A+B)|. $$
\end{theorem}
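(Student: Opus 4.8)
The plan is to prove the inequality
$|A+B|\ge|A|+|B|-|\pi(A+B)|$ by induction on $|B|$, following the classical $e$-transform (Dyson transform) approach. Write $S:=A+B$ and $H:=\pi(S)$, so that $S$ is a union of $H$-cosets. The base case $|B|=1$ is trivial since then $|A+B|=|A|$ and $|\pi(A+B)|\ge 1$. For the inductive step, I would first dispose of the case where $A+B$ is already $H$-periodic with a large period making the bound automatic; more precisely, if $A+B=A+B+H$, one reduces to studying $A$ and $B$ modulo $H$. So the real work is to produce, whenever $|B|\ge 2$ and $B$ is not contained in a single coset of $\pi(A+B)$, a pair of finite sets to which induction applies.

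The key device is the following. Pick $b_0,b_1\in B$ lying in distinct cosets of $H=\pi(A+B)$ (possible unless $B$ sits in one $H$-coset, a case handled directly). Translating, assume $0\in B$. Form the $e$-transform with $e:=b_1-b_0$: replace the pair $(A,B)$ by
$$ A' := A\cup(A+e), \qquad B' := B\cap(B-e). $$
Then $A'+B'\seq A+B$, so $\pi(A'+B')\supseteq$ nothing automatically, but crucially $|A'|+|B'|=|A|+|B| - (|A+e|-|A'|) + \ldots$ — more cleanly, one checks $|A'|+|B'|\ge |A|+|B|$ is \emph{false} in general, so instead one uses the identity $|A'|+|B'| = |A\cup(A+e)| + |B\cap(B-e)|$ together with $|A\cap(A+e)|+|A\cup(A+e)|=2|A|$ and $|B\cup(B-e)|+|B\cap(B-e)|=2|B|$. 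The point is to run a \emph{descent}: among all pairs $(A,B)$ with $A+B$ having a prescribed sumset (or contained in a fixed $H$-periodic set), pick one minimizing $|B|$ subject to $|A+B|$ not dropping; then show the $e$-transform either decreases $|B|$ while keeping $|A+B|$ inside the same periodic set (contradicting minimality) or forces $B$ to lie in a single coset of a subgroup $K$ with $A+K=A+B$, from which the bound follows because $|A+B|=|A+K|\ge|A|$ and one accounts for $|B|\le|K|=|\pi(A+B)|$.

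Concretely, the cleanest route I would take is this. Let $H:=\pi(A+B)$ and pass to $G/H$: replacing $A,B$ by their images, we may assume $\pi(A+B)=\{0\}$, i.e.\ $A+B$ is aperiodic, and it suffices to prove $|A+B|\ge|A|+|B|-1$. Now argue by induction on $|A|+|B|$. If $|B|=1$ this is clear. Otherwise choose $e=b_1-b_0$ with $b_0,b_1\in B$ distinct, set $A_1:=A\cup(A+e)$, $B_1:=B\cap(B-e)$; then $A_1+B_1\seq A+B$ and $b_0\in B_1$ so $B_1\ne\est$, while $|B_1|<|B|$. If $\pi(A_1+B_1)=\{0\}$ we may apply the induction hypothesis to get $|A+B|\ge|A_1+B_1|\ge|A_1|+|B_1|-1\ge|A|+|B|-1$, using $|A_1|\ge|A|$ and $|A_1|-|A|\ge|B|-|B_1|$ (this last inequality is the crux: $|A\cup(A+e)|-|A|=|A|-|A\cap(A+e)|$ and $|B|-|B\cap(B-e)|=|B\cup(B-e)|-|B|$, and one shows $|A\cap(A+e)|+|B\cup(B-e)|\le|A|+|B|$ via the observation $(A\cap(A+e))+(B\cup(B-e))\seq A+B$ when... ). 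If instead $\pi(A_1+B_1)=:K\ne\{0\}$, one shows $A+K=A+B$, whence $A+B$ is $K$-periodic, contradicting aperiodicity unless $K=\{0\}$.

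\textbf{The main obstacle.} The delicate point is the inductive step when the transform \emph{introduces} periodicity, i.e.\ $\pi(A_1+B_1)\ne\{0\}$: one must show this periodicity propagates back to make $A+B$ itself periodic, contradicting the assumption that $A+B$ is aperiodic (equivalently, recover the original non-reduced bound). Handling this requires the lemma that if $A_1+B_1$ is $K$-periodic and $B_1\seq b_0+\langle\text{stuff}\rangle$, then $A+B$ is also $K$-periodic — this is where one genuinely uses the structure of the $e$-transform and an auxiliary induction (or Kneser's own original bookkeeping over the subgroup generated by the differences in $B$). I would isolate this as a separate lemma and prove it by a secondary induction on $|B|$, which is the technical heart of the whole argument; everything else is the routine cardinality juggling sketched above.
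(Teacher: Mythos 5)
The paper does not prove this statement at all: Kneser's Theorem is quoted as an external classical result, cited to [Kn53, Kn55], and used as a black box. So your attempt can only be judged on its own merits, and as it stands it is a sketch with the two genuinely hard points of Kneser's theorem left open — indeed you flag them yourself. First, with your choice of transform $A_1=A\cup(A+e)$, $B_1=B\cap(B-e)$, $e=b_1-b_0$, the cardinality relation you need ($|A_1|-|A|\ge|B|-|B_1|$, or $|A_1|+|B_1|\ge|A|+|B|$) is simply not available: your justification trails off mid-sentence, and no inclusion of the form $(A\cap(A+e))+(B\cup(B-e))\seq A+B$ holds in general. The transform that gives an exact identity is the classical one $A(e)=A\cup(B+e)$, $B(e)=B\cap(A-e)$, for which $|A(e)|+|B(e)|=|A|+|B|$ because $B(e)+e=(B+e)\cap A$; with your variant the inductive bookkeeping does not close.

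Second, and more seriously, the case in which the transformed sumset acquires a nontrivial period $K=\pi(A_1+B_1)\ne\{0\}$ is the technical heart of Kneser's theorem, and you do not prove it; moreover the intermediate claim you state for it, ``one shows $A+K=A+B$, whence $A+B$ is $K$-periodic,'' is not justified and is not what the actual argument establishes (the genuine proof analyzes the sets $B\cap(b+K)$ coset by coset and runs a further induction, either on $|B|$ or on $|A+B|$, applying the full periodic form of the theorem in the quotient — which is one reason the standard proofs do not first reduce to the aperiodic case as you do). Deferring exactly this step to an unproved ``separate lemma'' means the proposal does not constitute a proof; everything you do complete is the routine part. Since the paper itself offers no proof to compare with, the honest assessment is that your outline reproduces the well-known strategy but omits precisely the steps that make Kneser's theorem a theorem.
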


Finally, we need the following lemma used in Kneser's original proof of his
theorem.
\begin{lemma}[\cite{b:kn1,b:kn2}]\label{l:Kneser}
If $A$ and $B$ are finite subsets of an abelian group, then
  $$ |A\cup B|+|\pi(A\cup B)| \ge \min \{ |A|+|\pi(A)|, |B|+|\pi(B)| \}. $$
\end{lemma}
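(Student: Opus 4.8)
The plan is to prove the inequality by induction on $|G|$, reducing first to the case where $A\cup B$ is aperiodic and then settling that case by an explicit analysis on the direct sum of the two periods. \emph{Reduction to $\pi(A\cup B)=\{0\}$:} put $H:=\pi(A\cup B)$ and replace $A$ by $A+H$ and $B$ by $B+H$. One checks that $(A+H)\cup(B+H)=A\cup B$ (both $A+H$ and $B+H$ lie in $(A\cup B)+H=A\cup B$), that $|A+H|\ge|A|$, and that $\pi(A+H)\supseteq\pi(A)$, so that $|A+H|+|\pi(A+H)|\ge|A|+|\pi(A)|$ and likewise for $B$. Since the left-hand side of the asserted inequality is unchanged and its right-hand side can only increase under this replacement, it suffices to treat $(A+H,B+H)$; but these sets are $H$-periodic, so when $H\ne\{0\}$ one passes to $G/H$, applies the induction hypothesis, and multiplies back by $|H|$ (using $\pi((A\cup B)/H)=\{0\}$). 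Hence I may assume $\pi(A\cup B)=\{0\}$, which also forces $\pi(A)\cap\pi(B)\seq\pi(A\cup B)=\{0\}$.

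\emph{Easy subcases.} Interchanging $A$ and $B$ if necessary, write the right-hand side as $|B|+|\pi(B)|$, so that the goal becomes $|A\stm B|\ge|\pi(B)|-1$. If $A\seq B$ then $\pi(B)=\pi(A\cup B)=\{0\}$ and there is nothing to prove; if $\pi(B)=\{0\}$ the inequality is vacuous; and if $\pi(A)=\{0\}$ then $|A\stm B|\ge|A|-|B|\ge|\pi(B)|-1$ directly from $|B|+|\pi(B)|\le|A|+1$. So I am reduced to the case where $P:=\pi(A)$ and $Q:=\pi(B)$ are both nontrivial, $P\cap Q=\{0\}$, and $A\not\seq B$, $B\not\seq A$; set $p:=|P|$ and $q:=|Q|$.

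\emph{The main case.} Let $R:=P\oplus Q$. On each coset $c+R$, identified with $P\times Q$, periodicity forces $A\cap(c+R)=P\times\alpha_c$ and $B\cap(c+R)=\beta_c\times Q$ for suitable $\alpha_c\seq Q$, $\beta_c\seq P$, whence $|(A\stm B)\cap(c+R)|=(p-|\beta_c|)\,|\alpha_c|$ and $|(B\stm A)\cap(c+R)|=|\beta_c|\,(q-|\alpha_c|)$. If some coset has $\alpha_c=Q$ but $\beta_c\ne P$, then $|A\stm B|\ge q$ and we are done; if some coset has $\beta_c=P$ but $\alpha_c\ne Q$, then $|B\stm A|\ge p$, and combined with $|A\stm B|-|B\stm A|=|A|-|B|\ge q-p$ this again gives $|A\stm B|\ge q$. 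So I may assume that each coset is either ``full'' (both $\alpha_c=Q$ and $\beta_c=P$) or has $|\alpha_c|\le q-1$ and $|\beta_c|\le p-1$. Let $S$, resp.\ $T$, denote the sum of the $|\alpha_c|$, resp.\ of the $|\beta_c|$, over the non-full cosets; the full cosets cancel from the hypothesis $|B|+q\le|A|+p$, which becomes
$$ p(S+1)\ge q(T+1), $$
and $S,T\ge1$ because $A\not\seq B$ and $B\not\seq A$. Now $|A\stm B|\ge S$ (each factor $p-|\beta_c|$ is $\ge1$), while $|A\stm B|=pS-\sum|\alpha_c|\,|\beta_c|\ge pS-ST=S(p-T)$. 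If $T\ge p$ then $p(S+1)\ge q(p+1)$ forces $S\ge q$, and if $S\ge q$ then $|A\stm B|\ge S\ge q$; so I may assume $S\le q-1$ and $T\le p-1$, and it remains to show $S(p-T)\ge q-1$. Putting $x:=S+1\in[2,q]$, $y:=T+1\in[2,p]$ (so that $px\ge qy$), this reduces to the elementary chain
$$ q(x-1)(p+1-y)=(x-1)(pq+q-qy)\ge(x-1)(p(q-x)+q)\ge q(q-1), $$
where the first inequality uses $qy\le px$ and the second amounts to $(q-x)(p(x-1)-q)\ge0$, valid since $q\ge x$ and $2p(x-1)\ge px\ge qy\ge2q$.

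I expect the main obstacle to be precisely this last case: one must recognise that the two periods sit in direct sum and that $A,B$ become ``products'' on every $R$-coset, discard the full cosets to reach the clean constraint $p(S+1)\ge q(T+1)$, and then be slightly careful with integrality in the final inequality (which is why it is run through $x,y\ge2$ rather than through the real quantities $|\alpha_c|,|\beta_c|$). The reduction step, by contrast, is routine bookkeeping once one notes that passing to the $H$-closures can only increase $|A|+|\pi(A)|$ and $|B|+|\pi(B)|$.
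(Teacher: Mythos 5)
The paper never proves Lemma~\ref{l:Kneser} at all: it is quoted from Kneser's papers \cite{b:kn1,b:kn2} as a known ingredient, so there is no in-paper argument to compare against; what you have written is an independent, self-contained proof, and I find it correct. The reduction to $\pi(A\cup B)=\{0\}$ is sound: replacing $A,B$ by $A+H,B+H$ leaves the left-hand side unchanged and can only increase each term on the right, and after factoring by $H$ one has $|\pi(A+H)|\le|H|\cdot|\pi(\bar A)|$, so the quotient inequality pulls back as claimed. The aperiodic case is also handled correctly: the product description $A\cap(c+R)=P\times\alpha_c$, $B\cap(c+R)=\beta_c\times Q$ is legitimate because $P\cap Q\seq\pi(A\cup B)=\{0\}$, the disposal of the mixed cosets ($\alpha_c=Q$, $\beta_c\ne P$, and the symmetric case via $|A\stm B|-|B\stm A|=|A|-|B|\ge q-p$) is right, the full cosets do cancel from $|B|+q\le|A|+p$ to give $p(S+1)\ge q(T+1)$, the bounds $|A\stm B|\ge S$ and $|A\stm B|\ge S(p-T)$ are valid, and the closing chain checks out, including the identity $(x-1)\bigl(p(q-x)+q\bigr)-q(q-1)=(q-x)\bigl(p(x-1)-q\bigr)$ and the bound $p(x-1)\ge q$ obtained from $2p(x-1)\ge px\ge qy\ge 2q$. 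Two small remarks, neither affecting correctness: the phrase ``induction on $|G|$'' is not meaningful for an arbitrary (possibly infinite) abelian group, but no induction is in fact needed, since after the single quotient by $H=\pi(A\cup B)$ the image of $A\cup B$ is aperiodic and your direct argument applies verbatim; and the degenerate situations $A=\est$ or $B=\est$, where the period of the empty set is all of $G$, are absorbed by your easy subcases but merit an explicit sentence.
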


\section{Proof of Theorem~\reft{mainn}}\label{s:proof}

We start with a series of results preparing the ground for the proof. Unless
explicitly indicated, at this stage we do not assume that $A$ satisfies the
assumptions of Theorem~\reft{mainn}.

For subsets $A,B\seq\Zn$ with $0<|B|<\infty$, by the \emph{density} of $A$ in
$B$ we mean the quotient $|A\cap B|/|B|$. In the case where $B=\Zn$, we speak
simply about the \emph{density of $A$}.

\begin{proposition}\label{p:1}
Let $n\ge 1$ be an integer, and suppose that $A\seq\Zn$ is a subset of
density larger than $0.3$. If $3A\ne\Zn$, then $A$ cannot have non-empty
intersections with exactly three cosets of an index-$5$ subgroup of $\Zn$.
\end{proposition}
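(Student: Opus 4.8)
The plan is to argue by contradiction, transferring the problem to the quotient $\Zn/H\cong\Z_5$, where $H$ is the index-$5$ subgroup under consideration. The affine group of $\Z_5$ acts transitively on its three-element subsets, and every affine map of $\Z_5$ lifts to a scalar automorphism $v\mapsto\lambda v$ ($\lambda\in\{1,2,3,4\}$) of $\Zn$ followed by a translation; such operations preserve $|A|$, the number of $H$-cosets met by $A$, and the condition $3A\ne\Zn$. Hence one may assume that $A$ meets exactly the cosets $H$, $e+H$, $2e+H$ for a fixed $e$ with $\Zn=H\oplus\<e\>$. Write $m:=|H|=5^{n-1}$ and $A_i:=\{h\in H\colon ie+h\in A\}$ for $i\in\{0,1,2\}$; the $A_i$ are non-empty, and the density hypothesis reads $|A_0|+|A_1|+|A_2|=|A|>\tfrac32 m$. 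Since $\{0,1,2\}+\{0,1,2\}=\Z_5$, the sumset $3A$ meets every coset of $H$, so $3A\ne\Zn$ yields a residue $s\in\Z_5$ and an $h_0\in H$ with $h_0\notin A_{t_1}+A_{t_2}+A_{t_3}$ for every $(t_1,t_2,t_3)\in\{0,1,2\}^3$ satisfying $t_1+t_2+t_3\equiv s\pmod 5$.

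The engine of the proof is a single observation, applied to a couple of representations of $s$. Fix such a representation $(t_1,t_2,t_3)$. Then $h_0-A_{t_1}$ and $A_{t_2}+A_{t_3}$ are disjoint subsets of $H$ --- a common element would exhibit $h_0$ as a sum of one element from each of $A_{t_1},A_{t_2},A_{t_3}$ --- so $|A_{t_1}|+|A_{t_2}+A_{t_3}|\le m$ by the pigeonhole principle, whence $A_{t_2}+A_{t_3}\ne H$ since $A_{t_1}\ne\est$. Being a proper subset of $H$ that is a union of cosets of its period $\pi(A_{t_2}+A_{t_3})$, the set $A_{t_2}+A_{t_3}$ omits at least one such coset, so $|A_{t_2}+A_{t_3}|\le m-|\pi(A_{t_2}+A_{t_3})|$; combining this with Kneser's bound $|A_{t_2}+A_{t_3}|\ge|A_{t_2}|+|A_{t_3}|-|\pi(A_{t_2}+A_{t_3})|$ gives $|A_{t_2}|+|A_{t_3}|\le m$. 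In the special case $t_2=t_3$ this reads $2|A_{t_2}|\le m$, which sharpens to $|A_{t_2}|\le(m-1)/2$ because $m$ is odd.

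To conclude, one runs through the residues $s\in\Z_5$. For each $s$ there is a representation in which some value $v$ is repeated, which gives $|A_v|\le(m-1)/2$, together with a representation containing both elements of $\{0,1,2\}\stm\{v\}$, which bounds the sum of the remaining two fibers by $m$; explicitly one may use $(0,0,0)$ and $(1,2,2)$ for $s=0$, $(2,2,2)$ and $(0,0,1)$ for $s=1$, $(0,1,1)$ and $(0,0,2)$ for $s=2$, $(1,1,1)$ and $(0,1,2)$ for $s=3$, and $(1,1,2)$ and $(0,2,2)$ for $s=4$. In every case $|A_0|+|A_1|+|A_2|\le(m-1)/2+m=(3m-1)/2<\tfrac32 m$, contradicting the density hypothesis. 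The remaining steps are mere bookkeeping: verifying that for each residue the two required kinds of representation exist, and keeping track of the parity of $m$. I do not foresee a real obstacle; the step that superficially looks like the hard one --- controlling a sumset $A_{t_2}+A_{t_3}$ whose period is large, where Kneser's inequality is weak --- is exactly the one neutralized by the elementary observation that a proper periodic subset of $H$ omits a whole coset of its period, forcing $|A_{t_2}|+|A_{t_3}|\le m$ no matter how large that period is.
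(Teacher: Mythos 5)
Your proof is correct and relies on the same engine as the paper's: decompose $A$ into fibers $A_0,\dotsc,A_4$ over the cosets of the index-$5$ subgroup, translate the hypothesis $3A\ne\Zn$ into a statement about fiber sumsets, and apply the pigeonhole observation (Lemma~\ref{l:pigeon}) to the disjointness of $h_0-A_{t_1}$ and $A_{t_2}+A_{t_3}$. Where you differ is the case organization: the paper first translates so that $0\notin 3A$ and then uses only the multiplicative automorphisms of $\Z_5$ to sort the support into three cases, whereas you invoke the full affine group of $\Z_5$ (transitive on its $3$-element subsets) to normalize the support to $\{0,1,2\}$ and then case on the residue $s$ of the omitted element, yielding five cases handled by one uniform template (one repeated-index representation gives $|A_v|\le(m-1)/2$, one mixed representation gives $|A_{j}|+|A_{k}|\le m$). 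This is a bit longer but more mechanical than the paper's argument, which in its Case~(i) uses a slightly different max/second-max bound that your template sidesteps. One unnecessary detour: you pass from $A_{t_2}+A_{t_3}\ne H$ to $|A_{t_2}|+|A_{t_3}|\le m$ via the omitted-coset observation plus Kneser's theorem, but this is just Lemma~\ref{l:pigeon} applied once more (pick $y\notin A_{t_2}+A_{t_3}$; then $y-A_{t_2}$ and $A_{t_3}$ are disjoint in $H$), which is the elementary route the paper takes and keeps Proposition~\ref{p:1} free of Kneser's theorem altogether.
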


\begin{proof}
Assuming that $3A\ne\Zn$ and  $F<\Zn$ is an index-$5$ subgroup such that $A$
intersects exactly three of its cosets, we obtain a contradiction.

Translating $A$ appropriately, we assume without loss of generality that
$0\notin 3A$. Fix $e\in\Zn$ such that $\Zn=F\oplus\<e\>$, and for $i\in[0,4]$
let $A_i:=(A-ie)\cap F$; thus,
$A=A_0\cup(e+A_1)\cup(2e+A_2)\cup(3e+A_3)\cup(4e+A_4)$ with exactly three of
the sets $A_i$ non-empty. Considering the action of the automorphisms of
$\Zn[]$ on its two-element subsets (equivalently, passing from $e$ to
$2e,3e$, or $4e$, if necessary), we further assume that one of the following
holds:
\begin{itemize}
\item[(i)]   $A_2=A_3=\est$;
\item[(ii)]  $A_3=A_4=\est$;
\item[(iii)] $A_0=A_4=\est$.
\end{itemize}
We consider these three cases separately.

\paragraph{Case (i): $A_2=A_3=\est$}
In this case we have $A=A_0\cup(e+A_1)\cup(4e+A_4)$, and from $0\notin 3A$ we
obtain $0\notin A_0+A_1+A_4$. Consequently, $|A_0|+|A_1+A_4|\le |F|$ by
Lemma~\refl{pigeon}, whence
\begin{align*}
  |A_0|+\max\{|A_1|,|A_4|\} &\le |F| \\
\intertext{and similarly,}
  |A_1|+\max\{|A_0|,|A_4|\} &\le |F|, \\
  |A_4|+\max\{|A_0|,|A_1|\} &\le |F|.
\end{align*}
Thus, denoting by $M$ the largest, and $m$ the second largest of the numbers
$|A_0|,|A_1|$, and $|A_4|$, we have $M+m\le|F|$. It follows that
  $$ |A| = |A_0|+|A_1|+|A_4| \le \frac32\,(M+m) \le \frac32\, |F|, $$
contradicting the density assumption $|A|>0.3\cdot 5^n$.

\paragraph{Case (ii): $A_3=A_4=\est$}
In this case from $0\notin 3A$ we get $3A_0\neq F$ and $A_1+2A_2\neq F$,
whence also $2A_0\ne F$ and $A_1+A_2\ne F$ and therefore $2|A_0|\le|F|$ and
$|A_1|+|A_2|\le|F|$ by Lemma~\refl{pigeon}. This yields
  $$ |A|=|A_0|+|A_1|+|A_2| \le \frac32\,|F|, $$
a contradiction as above.

\paragraph{Case (iii): $A_0=A_4=\est$}
Here we have $2A_1+A_3\neq F$ and $A_1+2A_2\neq F$ implying
$|A_1|+|A_3|\le|F|$ and $2|A_2|\le|F|$, respectively. This leads to a
contradiction as in Case (ii).
\end{proof}

\begin{lemma}\label{l:0.25}
Let $n\ge 1$ be an integer, and suppose that $A\seq\Zn$. If $2A$ has density
smaller than $0.5$, then $A$ has density smaller than $0.25$.
\end{lemma}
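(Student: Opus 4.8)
The plan is to work with the underlying index-$5$ subgroup structure and count. Fix an arbitrary index-$5$ subgroup $F<\Zn$ and a complement so that $\Zn=F\oplus\<e\>$; write $A_i:=(A-ie)\cap F$ for $i\in[0,4]$, so $A=\bigcup_{i=0}^4(ie+A_i)$ and $|A|=\sum_i|A_i|$. Since $2A=\bigcup_{i,j}((i+j)e+(A_i+A_j))$ and the index sums $i+j\bmod 5$ partition into the five residue classes, for each $r\in[0,4]$ the $F$-coset $re+F$ meets $2A$ in exactly $\bigcup_{i+j\equiv r}(A_i+A_j)$. Thus the density of $2A$ in the coset $re+F$ is the density (within $F$) of $\bigcup_{i+j\equiv r(5)}(A_i+A_j)$.

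The key step is to choose $F$ well. I would like to pick $F$ so that the five translates $A_0\longc A_4$ are "spread out" enough that each union $\bigcup_{i+j\equiv r}(A_i+A_j)$ is large. The natural idea: if $A$ has density $\ge 0.25$ in $\Zn$, I want to find an index-$5$ subgroup $F$ for which all five of the coset-densities $\delta_i:=|A_i|/|F|$ are reasonably balanced — ideally each $\delta_i$ is close to the average $\ge 0.25$. If I can arrange that each $\delta_i\ge 1/4$, then for every $r$ the class $i+j\equiv r$ contains at least one pair, say $(i_0,j_0)$, and $|A_{i_0}+A_{j_0}|\ge|A_{i_0}|+|A_{j_0}|-|\pi(A_{i_0}+A_{j_0})|$ by Kneser's Theorem; the period $\pi(A_{i_0}+A_{j_0})$ is a subgroup of $F$, and if it is proper then $|\pi|\le|F|/5$ (next available index in $F$, since $F\cong\Zn[n-1]$ has all proper subgroups of index a power of $5$), giving $|A_{i_0}+A_{j_0}|\ge(\delta_{i_0}+\delta_{j_0}-\tfrac15)|F|\ge(\tfrac14+\tfrac14-\tfrac15)|F|>0.3|F|$. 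If instead the period is all of $F$ then $A_{i_0}+A_{j_0}=F$ and the coset $re+F$ is fully covered. Either way each coset of $F$ has density $>0.3$ in $2A$, hence the density of $2A$ is $>0.3$, and certainly $2A$ has density $\ge 0.5$ once the bookkeeping is tightened — actually I expect the honest statement to be the contrapositive: if the density of $A$ is $\ge 0.25$ then the density of $2A$ is $\ge 0.5$.

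Making this rigorous requires a second idea, because an arbitrary $F$ need not balance the $\delta_i$: e.g. $A$ could be a single coset of $F$. So the real argument runs by contradiction. Suppose $2A$ has density $<0.5$ but $A$ has density $\ge 0.25$. Then there is a coset $re+F$ meeting $2A$ in density $<0.5$, i.e. $\bigvee_{i+j\equiv r}(A_i+A_j)$ has density $<0.5$ in $F$. For each individual pair $(i,j)$ with $i+j\equiv r$ we then get $|A_i+A_j|<|F|/2$; applying Kneser's Theorem (and using again that proper subgroups of $F$ have index $\ge 5$) forces $|A_i|+|A_j|<|F|/2+|F|/5=0.7|F|$ for all such pairs, or else the period is $F$ itself — but the latter would make $A_i+A_j=F$ of density $1$, contradiction; so $|A_i|+|A_j|<0.7|F|$ whenever $i+j\equiv r\pmod 5$. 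Taking $i=j$ when $r$ is even, or using that among $\{0,1,2,3,4\}$ every residue is a sum of two residues (sometimes equal), one deduces a linear-programming-type bound on $\sum|A_i|$. The crux is checking that these inequalities — one "$\delta_i+\delta_j<0.7$" for each pair with $i+j$ in the appropriate class, together with the Kneser slack — force $\sum_i\delta_i<1.25$, i.e. density of $A$ below $0.25$: this is where I expect the main effort, essentially a small optimization over the pattern of which $A_i$ vanish, mirroring the three-case split in Proposition~\ref{p:1} but now for $2A$ instead of $3A$. The case $r=0$ (so $i+j\equiv 0$: pairs $(0,0),(1,4),(2,3)$) gives $2\delta_0<0.7$, $\delta_1+\delta_4<0.7$, $\delta_2+\delta_3<0.7$, hence $\sum\delta_i<0.35+0.7+0.7=1.75$ — too weak alone, so one must also exploit that $2A$ has density $<0.5$ in *some* coset gives information only for one $r$; the honest route is to run the same deduction for the coset achieving the minimum and combine with Kneser's Theorem applied to $\pi(2A)$ globally (Theorem~\ref{t:Kneser} with $A=B$: $|2A|\ge 2|A|-|\pi(2A)|$), so $|\pi(2A)|>2|A|-|2A|\ge 2(0.25)\cdot5^n-0.5\cdot5^n=0$, meaning $2A$ is periodic; passing to the quotient $\Zn/\pi(2A)$ reduces to the aperiodic case where the LP bound above is sharp and yields density of $A$ below $0.25$. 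The main obstacle is organizing this reduction-to-periodic-quotient cleanly so the constant $0.25$ comes out exactly, rather than something slightly off.
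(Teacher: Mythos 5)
Your proposal circles the right tools but never closes the argument, and most of it is a detour that the paper avoids entirely. The paper's proof is a four-line application of Kneser's theorem in the quotient by $H:=\pi(2A)$: since $2(A+H)=2A+H=2A$, Kneser gives $|2A|\ge 2|A+H|-|H|$, hence $|\phi_H(2A)|\ge 2|\phi_H(A)|-1$. Feeding in the hypothesis $|\phi_H(2A)|<\tfrac12|\Zn/H|$ yields $|\phi_H(A)|<\tfrac14(|\Zn/H|+2)$, and the arithmetic fact $|\Zn/H|\equiv 1\pmod 4$ (any power of $5$) upgrades this to $|\phi_H(A)|<\tfrac14|\Zn/H|$; finally the density of $A$ is at most the density of $\phi_H(A)$. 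No index-$5$ subgroup, no coset decomposition, no linear programming, and notably no need to first establish that $2A$ is periodic — the inequality is applied with $H=\pi(2A)$ whether or not $H$ is trivial.

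Two concrete gaps in your write-up. First, the entire coset-decomposition/LP plan (picking an index-$5$ subgroup $F$, trying to balance the $\delta_i$, collecting constraints $\delta_i+\delta_j<0.7$) is a dead end: you yourself compute that the constraints for a single residue class $r$ only give $\sum_i\delta_i<1.75$, and there is no clear path to $1.25$, because for each coset of $F$ you only learn something about the one class of pairs landing there, and $2A$ can have density $<0.5$ in just one such coset. Second, the pivot to the global Kneser bound is the right instinct but is executed incorrectly: from $|2A|\ge 2|A|-|\pi(2A)|$ with $|A|\ge 0.25\cdot 5^n$ and $|2A|<0.5\cdot 5^n$ you deduce $|\pi(2A)|>0$, which is vacuously true (every period contains $0$) and establishes nothing. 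To extract periodicity you would need to exploit that $|A|$, $|2A|$ are integers and $5^n\equiv 1\pmod 4$ — but even then, after passing to $\Zn/\pi(2A)$ you claim one must rerun "the LP bound" in the aperiodic quotient, whereas in fact a single further application of Kneser in the quotient (or, more cleanly, the one Kneser application above to $A+H$) finishes the job outright. The number-theoretic slack $|\Zn/H|\equiv 1\pmod 4$, which is what makes the sharp constant $0.25$ come out, is absent from your argument.
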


\begin{proof}
Write $H:=\pi(2A)$ and let $\phi_H\colon\Zn\to\Zn/H$ be the canonical
homomorphism. Applying Kneser's theorem to the set $A+H$ and observing that
$2(A+H)=2A+H=2A$, we get $|2A|\ge2|A+H|-|H|$, whence
$|\phi_H(2A)|\ge2|\phi_H(A)|-1$. If the density of $2A$ in $\Zn$ is smaller
than $0.5$, then so is the density of $\phi_H(2A)$ in $\Zn/H$ (in fact, the
two densities are equal); hence, in this case
  $$ \frac12\,|\Zn/H| > |\phi_H(2A)| \ge2|\phi_H(A)|-1. $$
This yields $|\phi_H(A)|<\frac14\big(|\Zn/H|+2\big)$ and thus, indeed,
$|\phi_H(A)|<\frac14\,|\Zn/H|$ as $|\Zn/H|\equiv 1\pmod 4$. It remains to
notice that the density of $A$ in $\Zn$ does not exceed the density of
$\phi_H(A)$ in $\Zn/H$.
\end{proof}

\begin{proposition}\label{p:2}
Let $n\ge 1$ be an integer, and suppose that $A\seq\Zn$ is a subset of
density larger than $0.3$, such that $3A\ne\Zn$. If $A$ has density larger
than $0.5$ in a coset of an index-$5$ subgroup $F<\Zn$, then $A$ has
non-empty intersections with at most three cosets of $F$.
\end{proposition}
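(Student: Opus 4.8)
The plan is to argue by contradiction: suppose $A$ meets four or five cosets of $F$, pick a coset $F_0 = c+F$ in which $A$ has density exceeding $0.5$, and translate so that $c = 0$, i.e. $|A\cap F| > |F|/2$. Write $A_i := (A - ie)\cap F$ for $i\in[0,4]$ as in Proposition~\ref{p:1}, where $\Zn = F\oplus\langle e\rangle$; after replacing $e$ by a suitable multiple we may assume the dense coset is $A_0$, so $|A_0| > |F|/2$. The key point is that $2A_0$ then has density exceeding $0.5$ in $F$ — wait, that is not automatic, so instead I would use the complementary estimate: since $|A_0| > |F|/2$, for any subset $B\seq F$ with $|B| \ge |F|/2$ we have $A_0 + B = F$ by Lemma~\ref{l:pigeon} (as $|A_0|+|B| > |F|$ would be the wrong direction — rather, if $A_0+B\ne F$ then $|A_0|+|B|\le|F|$, contradiction). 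More usefully, $2A_0 = A_0 + A_0$, and again by Lemma~\ref{l:pigeon} applied contrapositively, $2A_0 \ne F$ would force $2|A_0|\le|F|$; hence $2A_0 = F$.

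Next I would exploit $3A\ne\Zn$ together with $2A_0 = F$. Translating $A$ so that $0\notin 3A$, we get $0\notin A_i+A_j+A_k$ for every triple $i+j+k\equiv 0\pmod 5$. Taking the triple $(0,0,0)$ gives $0\notin 3A_0$, but that is consistent with $2A_0=F$ only if $0\notin A_0$; fine. The real leverage comes from triples involving the other coset indices: for each $i\ne 0$, choosing $j,k$ with $i+j+k\equiv 0$ and using that a sum of $A_0$-heavy sets saturates $F$, I expect to conclude $A_i = \emptyset$ for all but at most two values of $i$ among $\{1,2,3,4\}$. Concretely, whenever some $A_i$ with $i\ne 0$ is non-empty, pick $a\in A_i$; then for the triple $(i, j, j')$ with $2j\equiv -i$ (so $j$ exists since $5$ is odd, namely $j \equiv -i\cdot 3 \pmod 5$), non-emptiness of $A_i$ combined with $2A_0=F$-type saturation, or more precisely with $A_0 + A_j$ type sumsets, should force $A_j = A_{j'} = \emptyset$; cycling through the indices and counting shows at most two of $A_1,\dots,A_4$ survive. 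Combined with the always-present $A_0$, that is at most three non-empty $A_i$'s — but then Proposition~\ref{p:1} with the density hypothesis $|A|>0.3\cdot 5^n$ rules out \emph{exactly} three, so $A$ meets at most two cosets; either way $A$ meets at most three cosets of $F$, which is the claim (and in fact slightly more).

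The main obstacle I anticipate is making the saturation argument in the previous paragraph precise and uniform across the cases: $2A_0 = F$ is clean, but to kill $A_j$ I need sumsets like $A_0 + A_j$ or $A_0 + A_0 + A_j$ to equal $F$ whenever $A_j\ne\emptyset$, and then read off from $0\notin(\text{that sumset})$ a contradiction — this requires that the relevant triple of indices sums to $0$ modulo $5$ and that enough copies of the index $0$ appear. Since $5$ is prime, the additive combinatorics of the index set $\Z_5$ is friendly: every non-zero residue $i$ can be written as $0+0+i$ (giving $2A_0 + A_i = F + A_i$, hence $A_i=\emptyset$ once $0\notin 2A_0+A_i$... but that needs $0\notin 3A$ to apply to a triple that is literally $(0,0,i)$, which sums to $i\ne 0$, so it does \emph{not} apply). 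This is exactly the subtlety: I can only use triples summing to $0$. So the correct move is: if $A_i\ne\emptyset$ and $A_j\ne\emptyset$ with $i+j\not\equiv 0$, set $k := -(i+j)$; then $0\notin A_i+A_j+A_k$ forces $A_k = \emptyset$ provided $A_i+A_j$ is already large — and here is where $|A_0|>|F|/2$ must be leveraged, namely by arranging $i$ or $j$ to be $0$. Tracking which pairs of non-zero indices can coexist, one finds the non-zero indices present must lie in an arithmetic-progression-like set of size $\le 2$, which together with careful bookkeeping (and a fallback to Proposition~\ref{p:1} to exclude the boundary case of exactly three cosets) completes the proof.
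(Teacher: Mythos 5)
Your opening steps match the paper's: translate so that the dense coset is $F$ itself (indexed by $A_0$), and use the pigeonhole lemma to conclude $2A_0 = F$. But the remainder of your plan has two problems, one minor and one fatal.

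The minor issue is the translation. You translate twice — once so that $A_0$ is the dense fiber, and again so that $0\notin 3A$ — and these are incompatible. If $|A_0|>|F|/2$ then $2A_0=F$, hence $3A_0=2A_0+A_0=F$, so $0\in 3A_0\subseteq 3A$. Thus if $A_0$ is the dense fiber you \emph{cannot} also have $0\notin 3A$. The paper avoids this by never translating to $0\notin 3A$; it instead observes that $A_i\ne\est$ together with $2A_0=F$ forces $ie+F\subseteq 3A$, and since $3A_0=F$ also puts $F\subseteq 3A$, the condition $3A\ne\Zn$ then implies at least one of $A_1,\dots,A_4$ is empty — no translation to $0\notin 3A$ needed.

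The fatal gap is your claim that ``cycling through the indices and counting shows at most two of $A_1,\dots,A_4$ survive.'' The saturation argument — using $2A_0=F$ — only kills \emph{one} coset. It shows some $A_i$ with $i\in[1,4]$ is empty, i.e.\ $A$ meets at most four cosets. To go from four to three is where all the difficulty lies, and none of the triples-summing-to-zero bookkeeping you sketch achieves it, because the only pair you know saturates $F$ is $(A_0,A_0)$; the other sums $A_i+A_j$ can be small. The paper handles this by a substantial second stage: assuming exactly one fiber (say $A_4$) is empty, it derives $A_0+\big((A_1+A_3)\cup 2A_2\big)\ne F$, bounds $|A_2|<0.25|F|$ via Lemma~\ref{l:0.25}, bounds the period $H$ of that sumset by $|H|\le 5^{-2}|F|$, and then plays $\min\{|A_1|+|A_3|,\,2|A_2|\}$ against the density hypothesis using Lemma~\ref{l:Kneser} and Kneser's theorem to reach $A_1+A_3=F$, a contradiction. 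Your proposal contains no substitute for this, and the hedging language (``should force,'' ``I expect to conclude'') correctly signals that you have not found one. As written the argument proves only ``at most four cosets,'' not the stated ``at most three.''
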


\begin{proof}
Fix $e\in\Zn$ with $\Zn=F\oplus\<e\>$, and for $i\in[0,4]$ set
$A_i:=(A-ie)\cap F$; thus, $A=A_0\cup(e+A_1)\cup\dotsb\cup(4e+A_4)$. Having
$A$ replaced with its appropriate translate, we can assume that $A_0$ has
density larger than $0.5$ in $F$, whence $2A_0=F$ by Lemma~\refl{pigeon}. If
now $A_i$ is non-empty for some $i\in[1,4]$, then $ie+F=(ie+A_i)+2A_0\seq
3A$. This shows that at least one of the sets $A_i$ is empty. Moreover, we
can assume that \emph{exactly} one of them is empty, as otherwise the proof
is over. Replacing $e$ with one of $2e,3e$, or $4e$, is necessary, we assume
that $A_4=\est$ while $A_i\neq\est$ for $i\in[1,3]$, and aim to obtain a
contradiction. Notice, that
  $$ A=A_0\cup(e+A_1)\cup(2e+A_2)\cup(3e+A_3), $$
and that $ie+F\seq 3A$ for each $i\in[1,3]$ by the observation above,
implying $4e+F\nsubseteq 3A$. The last condition yields
\begin{equation}\label{e:loc1}
  A_0 + \big( (A_1+A_3) \cup 2A_2 \big) \neq F,
\end{equation}
and it follows from Lemma~\refl{pigeon} that
\begin{equation}\label{e:loc2}
  |A_0|+|(A_1+A_3) \cup 2A_2| \le |F|.
\end{equation}
Notice, that the last estimate implies $|2A_2|\le|F|-|A_0|<0.5|F|$, whence
\begin{equation}\label{e:A2small}
   |A_2|<0.25|F|
\end{equation}
by Lemma~\refl{0.25}.

Let $H$ be the period of the left-hand side of \refe{loc1}; thus, $H$ is a
proper subgroup of $F$, and we claim that, in fact,
\begin{equation}\label{e:loc3}
  |H|\le 5^{-2}|F|.
\end{equation}
To see this, suppose for a contradiction that $|F/H|=5$. Denote by $\phi_H$
the canonical homomorphism $\Zn\to\Zn/H$. From $|A_0|>0.5|F|$ we conclude
that $|\phi_H(A_0)|\ge 3$, and then \refe{loc1} along with
Lemma~\refl{pigeon} shows that
  $$ |\phi_H((A_1+A_3)\cup 2A_2)| \le 5-|\phi_H(A_0)| \le 2. $$
This gives $|\phi_H(A_2)|=1$, $\min\{|\phi_H(A_1)|,|\phi_H(A_3)|\}=1$, and
$\max\{|\phi_H(A_1)|,|\phi_H(A_3)|\}\le 5-|\phi_H(A_0)|$. As a result,
  $$ |\phi_H(A_0)|+|\phi_H(A_1)|+|\phi_H(A_2)|+|\phi_H(A_3)| \le 7, $$
implying $|A|=|A_0|+|A_1|+|A_2|+|A_3|\le 7|H|<1.5|F|$, contrary to the
density assumption. This proves~\refe{loc3}.

Since $\pi((A_1+A_3) \cup 2A_2)\le H$ by the definition of the subgroup $H$,
applying subsequently Lemma~\refl{Kneser} and then Kneser's theorem we obtain
\begin{align}\label{e:loc4}
  |(A_1+A_3) \cup 2A_2|
      &\ge \min \{ |A_1+A_3|+|\pi(A_1+A_3)|,\, |2A_2|+|\pi(2A_2)| \} - |H|
                \nonumber \\
      &\ge \min \{ |A_1|+|A_3|,\, 2|A_2| \} - |H|.
\end{align}
If $|A_1|+|A_3|\le 2|A_2|$, then from \refe{loc2}, \refe{loc4},
\refe{A2small}, and~\refe{loc3},
\begin{multline*}
  |F| \ge |A_0|+|A_1|+|A_3|-|H| = |A|-|A_2|-|H| \\
           > \frac32\,|F|-\frac14\,|F|-\frac1{25}|F| = \frac{121}{100}\,|F|,
\end{multline*}
a contradiction. Thus, we have
  $$ |A_1|+|A_3| > 2|A_2| $$
and then
  $$ |A_0|+2|A_2| \le |F|+|H| $$
by \refe{loc2} and \refe{loc4}. The latter estimate gives
  $$ \frac32\,|F| < |A| = |A_0|+|A_1|+|A_2|+|A_3|
                         \le \frac{|F|+|H|}2+\frac{|A_0|}2+|A_1|+|A_3|, $$
whence
  $$ \frac12\,|A_0| + |A_1| + |A_3| > |F|-\frac12|H|. $$
Using again \refe{loc2} and applying Kneser's theorem, we now obtain
\begin{multline*}
  |F| \ge |A_0|+|A_1+A_3| \ge |A_0|+|A_1|+|A_3|-|\pi(A_1+A_3)| \\
                       > \frac12\,|A_0| + |F| - \frac12\,|H| - |\pi(A_1+A_3)|
\end{multline*}
leading, in view of \refe{loc3}, to $|\pi(A_1+A_3)|\ge (|A_0|-|H|)/2>|F|/5$
and thus to $\pi(A_1+A_3)=F$. This, however, means that $A_1+A_3=F$,
contradicting \refe{loc1}.
\end{proof}

Propositions \refp{1} and \refp{2} show that to establish
Theorem~\reft{mainn}, it suffices to consider sets $A\seq\Zn$ with density
smaller than $0.5$ in every coset of every index-$5$ subgroup.

\begin{lemma}\label{l:ABC}
Let $n\ge 1$ be an integer, and suppose that $A,B,C\seq\Zn$ are subsets of
densities $\alp$, $\bet$, and $\gam$, respectively. If $0.4<\alp,\bet<0.5$
and $\alp+\bet+3\gam>1.5$, then $A+B+C=\Zn$.
\end{lemma}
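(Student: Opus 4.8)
The plan is to argue by contradiction, using Kneser's Theorem and the pigeonhole bound of Lemma~\refl{pigeon} to reduce---via passage to a quotient of order $5$ or $25$---to a short case check. So assume $A+B+C\ne\Zn$. Since $A,B,C$ are all nonempty and $A+B+C\ne\Zn$, the set $B+C$ is a nonempty proper subset of $\Zn$. I would also record at the outset the numerical content of the hypotheses: from $\alp,\bet<0.5$ we get $3\gam>1.5-\alp-\bet>0.5$, so $\gam>1/6$; and then $\alp+\bet+\gam>\tfrac23(\alp+\bet)+\tfrac12>\tfrac23\cdot0.8+\tfrac12>1$, indeed $\alp+\bet+\gam-1>1/30$. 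Only these consequences will be used.

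The key step is to produce a large period. Put $K:=\pi(B+C)$, a proper subgroup of $\Zn$. Kneser's Theorem gives $|B+C|\ge|B|+|C|-|K|$, while $A+(B+C)\ne\Zn$ together with Lemma~\refl{pigeon} gives $|A|+|B+C|\le 5^n$; combining these, $|K|\ge|A|+|B|+|C|-5^n=(\alp+\bet+\gam-1)\,5^n>5^n/30$. Since $|K|$ is a power of $5$ exceeding $5^n/30$, necessarily $|K|\ge 5^{n-2}$, so the index $m:=[\Zn:K]$ belongs to $\{5,25\}$.

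Next pass to the quotient $\bar G:=\Zn/K$ under $\phi:=\phi_K$. Because $K$ is the \emph{full} period of $B+C$, the image $\phi(B+C)=\phi(B)+\phi(C)$ is aperiodic in $\bar G$, so Kneser's Theorem applied inside $\bar G$ gives $|\phi(B+C)|\ge|\phi(B)|+|\phi(C)|-1$. Also, $B+C$ being $K$-periodic, so is $A+B+C$; hence $A+B+C\ne\Zn$ forces $\phi(A)+\phi(B+C)\ne\bar G$, and Lemma~\refl{pigeon} gives $|\phi(A)|+|\phi(B+C)|\le m$. Combining, $|\phi(A)|+|\phi(B)|+|\phi(C)|\le m+1$. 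On the other hand, pigeonhole gives $|\phi(X)|\ge|X|/|K|$, which I would use in rounded-up form: $|A|/|K|=m\alp>\tfrac25m$ and likewise for $B$, while $|C|/|K|=m\gam>\tfrac16m$. For $m=5$ this forces $|\phi(A)|,|\phi(B)|\ge3$ and $|\phi(C)|\ge1$, so $|\phi(A)|+|\phi(B)|+|\phi(C)|\ge7>6$; for $m=25$ it forces $|\phi(A)|,|\phi(B)|\ge11$ and $|\phi(C)|\ge5$, so $|\phi(A)|+|\phi(B)|+|\phi(C)|\ge27>26$. Either way this contradicts the bound $\le m+1$, and the lemma follows.

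The step that genuinely needs care is the case $m=25$: it is \emph{not} ruled out by $m\in\{5,25\}$ alone, and closes only because the integrality of $|\phi(A)|,|\phi(B)|,|\phi(C)|$ forces their sum up to $27$. This is exactly where the hypothesis $\alp+\bet+3\gam>1.5$ is used---it is what gives $\gam>1/6$, hence $|\phi(C)|\ge5$ when $m=25$---and the bounds $\alp,\bet<0.5$ enter the proof only through this deduction. A secondary point to verify is that $\phi(B+C)$ really is aperiodic in $\bar G$, which is what makes the period term in the Kneser bound downstairs equal to $1$.
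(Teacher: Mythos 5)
Your proof is correct, and it takes a genuinely different route from the paper's. You set $K:=\pi(B+C)$ rather than $H:=\pi(A+B+C)$, and by combining Kneser's theorem with Lemma\refl{pigeon} you extract the a priori lower bound $|K|>5^n/30$, which pins down the index $m=[\Zn:K]$ to the set $\{5,25\}$ before any case analysis begins. You then pass to the quotient where $\phi(B+C)$ is aperiodic, so the period term in Kneser's bound downstairs is literally $1$, and a single clean counting inequality $|\phi(A)|+|\phi(B)|+|\phi(C)|\le m+1$ closes both remaining cases by integrality. The paper instead works throughout with $H=\pi(A+B+C)$ and must handle three cases, $|\Zn/H|\in\{5,25\}$ and $|\Zn/H|\ge 125$; the large-index case requires a separate two-fold Kneser estimate with the observation $\pi(A+B)\le H$ and a less transparent numerical splitting $\frac23|A|+\frac23|B|+\frac13(|A|+|B|+3|C|)$. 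Your argument buys the elimination of the $\ge 125$ case outright and a uniform treatment of the other two; it is a real simplification. One small point worth checking as you wrote it: the step $B+C\ne\Zn$ is needed before $K<\Zn$ can be asserted, and it follows because $A\ne\est$ (as $\alp>0.4$), so $A+(B+C)\ne\Zn$ forces $B+C\ne\Zn$ --- you implicitly use this, and it is fine, but it deserves a word.
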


\begin{proof}
Let $H:=\pi(A+B+C)$; assuming that $H\ne\Zn$, we obtain a contradiction. As
above, let $\phi_H\colon\Zn\to\Zn/H$ denote the canonical homomorphism.

If $|\Zn/H|=5$ then, in view of $|A|/|H|=5\alp>2$ we have $|\phi_H(A)|\ge 3$.
Similarly, $|\phi_H(B)|\ge 3$, and it follows that
$\phi_H(A)+\phi_H(B)=\Zn/H$; that is, $A+B+H=\Zn$. Hence,
$A+B+C=(A+B+H)+C=\Zn$, contradicting the assumption $H\ne\Zn$.

If $|\Zn/H|\ge 125$ then, by Kneser's Theorem and taking into account that
\begin{equation}\label{e:ABC}
  \pi(A+B)\le\pi(A+B+C)=H,
\end{equation}
we have
\begin{align*}
  |A+B+C| &\ge |A+B|+|C|-|H| \\
          &\ge |A|+|B|+|C|-2|H| \\
          &=   \frac23\,|A|+\frac23\,|B|+\frac13\big(|A|+|B|+3|C|)-2|H| \\
          &>   \Big(\frac23\cdot 0.4+\frac23\cdot0.4+\frac13\cdot1.5
                         -\frac2{125} \Big)\cdot 5^n \\
          &>   5^n,
\end{align*}
a contradiction.

Finally, consider the situation where $|\Zn/H|=25$. In this case
$|A|/|H|=25\alp>10$ whence $|A+H|\ge 11|H|$ and similarly, $|B+H|\ge 11|H|$.
In view of \refe{ABC}, Kneser's Theorem gives
  $$ |A+B+H| = |(A+H)+(B+H)| \ge |A+H|+|B+H|-|H| \ge 21|H|. $$
Also,
  $$ |C|/|H| =25 \gam > \frac{25}3(1.5-\alp-\bet) > \frac{25}6 > 4. $$
Consequently, $|C+H|\ge 5|H|$ and therefore
  $$ |A+B+H|+|C+H| \ge 26|H| > 5^n. $$
Lemma~\refl{pigeon} now implies $A+B+C=(A+B+H)+(C+H)=\Zn$, contrary to the
assumption $H\ne\Zn$.
\end{proof}

\begin{proposition}\label{p:3}
Let $n\ge 1$ be an integer, and suppose that $A\seq\Zn$ is a subset of
density larger than $0.3$, such that $3A\ne\Zn$. If $F<\Zn$ is an index-$5$
subgroup with the density of $A$ in every $F$-coset smaller than $0.5$, then
there is at most one $F$-coset where the density of $A$ is larger than $0.4$.
\end{proposition}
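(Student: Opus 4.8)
The plan is to argue by contradiction. Fix $e\in\Zn$ with $\Zn=F\oplus\<e\>$, put $A_i:=(A-ie)\cap F$ for $i\in[0,4]$, and let $\alp_i:=|A_i|/|F|$ denote the density of $A$ in the coset $ie+F$. The hypotheses give $\alp_i<0.5$ for every $i$, and summing over $i$ yields $\alp_0\longp\alp_4=5\cdot 5^{-n}|A|>1.5$. If $n=1$ the statement is vacuous, since $\alp_i<0.5$ then forces $A=\est$; so assume $n\ge 2$, in which case $F\cong\Zn[n-1]$ and Lemma~\refl{ABC} applies to subsets of $F$. Since multiplication by any nonzero scalar is an automorphism of $\Zn$ fixing $F$, translations and dilations of $A$ realize the full affine group of $\Zn/F\cong\Z_5$ acting on the $F$-cosets, and each of them preserves the density of $A$, the property $3A\ne\Zn$, and the multiset of per-coset densities. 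Assuming, towards a contradiction, that $A$ has density larger than $0.4$ in two distinct $F$-cosets, the $2$-transitivity of this action lets me take these cosets to be $0+F$ and $e+F$, so that $0.4<\alp_0,\alp_1<0.5$.

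Next I would record that $3A\supseteq(i+j+k)e+(A_i+A_j+A_k)$ for all $i,j,k\in[0,4]$, so it suffices to produce, for each residue $r\in[0,4]$, a triple with $i+j+k\equiv r\pmod 5$ and $A_i+A_j+A_k=F$. Using only the two heavy cosets, Lemma~\refl{ABC} disposes of $r=0,1,2,3$ at once: since $\alp_0,\alp_1\in(0.4,0.5)$, the inequalities $5\alp_0>1.5$, $2\alp_0+3\alp_1>1.5$, $2\alp_1+3\alp_0>1.5$, and $5\alp_1>1.5$ all hold, so Lemma~\refl{ABC} gives $3A_0=F$, $A_0+A_0+A_1=F$, $A_1+A_1+A_0=F$, and $3A_1=F$; hence $ie+F\seq 3A$ for $i\in[0,3]$. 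Therefore, if $3A\ne\Zn$, then $4e+F\nsubseteq 3A$, and in particular none of $A_0+A_0+A_4$, $A_1+A_1+A_2$, $A_0+A_1+A_3$ equals $F$ --- the triples $(0,0,4)$, $(1,1,2)$, $(0,1,3)$ all sum to $4$ modulo $5$, and in each the first two indices lie in $\{0,1\}$, so Lemma~\refl{ABC} applies. These three failures force $\alp_4\le\frac13(1.5-2\alp_0)$, $\alp_2\le\frac13(1.5-2\alp_1)$, and $\alp_3\le\frac13(1.5-\alp_0-\alp_1)$; adding them gives $\alp_2+\alp_3+\alp_4\le 1.5-\alp_0-\alp_1$, i.e.\ $\alp_0\longp\alp_4\le 1.5$. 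This contradicts the density bound, so in fact $3A=\Zn$, contrary to hypothesis; the proposition follows.

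I expect the genuinely delicate point to be the tightness of this bookkeeping rather than any individual inequality: for each of the residues $0,1,2,3$ a single ``cheap'' triple built from the two heavy cosets must suffice, while for the leftover residue $4$ the three inequalities produced by Lemma~\refl{ABC} sum to an exact equality against the global bound $\alp_0\longp\alp_4>1.5$, so there is no slack. Thus the argument depends on choosing precisely the right triples and on verifying, for each of them, that the hypotheses $0.4<\alp,\bet<0.5$ of Lemma~\refl{ABC} are met --- which is where the care is concentrated.
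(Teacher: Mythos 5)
Your proof is correct and is essentially the same as the paper's: after normalizing so that $\alp_0,\alp_1>0.4$, you use Lemma\refl{ABC} to cover the residues $0,1,2,3$ via triples built from $A_0,A_1$, deduce $4e+F\nsubseteq 3A$, and then apply Lemma\refl{ABC} in contrapositive form to exactly the same three triples $(0,0,4)$, $(0,1,3)$, $(1,1,2)$ that the paper uses, summing the resulting bounds to contradict $\sum_i\alp_i>1.5$. Your extra remarks (the $n=1$ case, and phrasing the reduction as a $2$-transitive affine action instead of ``shifting and choosing $e$'') are harmless elaborations, not a different route.
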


\begin{proof}
Suppose for a contradiction that there are two (or more) $F$-cosets
containing more than $0.4|F|$ elements of $A$ each. Shifting $A$ and choosing
$e\in\Zn\stm F$ appropriately, we can then write
$A=A_0\cup(e+A_1)\cup(2e+A_2)\cup(3e+A_3)\cup(4e+A_4)$ with
$A_0,A_1,A_2,A_3,A_4\seq F$ satisfying $\min\{|A_0|,|A_1|\}>0.4|F|$.

By Lemma~\refl{ABC} (applied to the group $F$), we have
  $$ 3A_0=2A_0+A_1=A_0+2A_1=3A_1=F, $$
implying $F\cup(e+F)\cup(2e+F)\cup(3e+F)\seq 3A$ and, consequently,
$4e+F\not\seq 3A$ by the assumption $3A\ne\Zn$. Furthermore, if we had
$2|A_0|+3|A_4|>1.5|F|$, this would imply $2A_0+A_4=F$ by Lemma~\refl{ABC},
resulting in $4e+F\seq 3A$; thus,
\begin{equation}\label{e:tmp62}
  2|A_0|+3|A_4|<1.5|F|.
\end{equation}
Similarly,
\begin{equation}\label{e:tmp63}
  |A_0|+|A_1|+3|A_3|<1.5|F|
\end{equation}
and
\begin{equation}\label{e:tmp64}
  2|A_1|+3|A_2|<1.5|F|
\end{equation}
(as otherwise by Lemma~\refl{ABC} we would have $A_0+A_1+A_3=F$ and
$2A_1+A_2=F$, respectively, resulting in $4e+F\seq 3A$). Adding
up~\refe{tmp62}--\refe{tmp64} we obtain
  $$ |A|=|A_0|+|A_1|+|A_2|+|A_3|+|A_4| < 1.5|F|=0.3\cdot 5^n, $$
contrary to the assumption on the density of $A$.
\end{proof}

We now use Fourier analysis to complete the argument and prove
Theorem~\reft{mainn}.

Suppose that $n\ge 2$, and that a set $A\seq\Zn$ has density $\alp>0.3$ and
satisfies $3A\ne\Zn$; we want to show that $A$ is contained in a union of two
cosets of an index-$5$ subgroup. Having translated $A$ appropriately, we can
assume that $0\notin 3A$. Denoting by $1_A$ the indicator function of $A$,
consider the Fourier coefficients
  $$ \hat1_A(\chi):=5^{-n}\sum_{a\in A} \chi(a),\ \chi\in\widehat{\Zn}. $$
For every character $\chi\in\widehat{\Zn}$, find a cube root of unity
$\zet(\chi)$ such that, letting $z(\chi):=-\hat1_A(\chi)\zet(\chi)$, we have
$\Re(z(\chi))\ge 0$. The assumption $0\notin 3A$ gives
  $$ \sum_\chi (\hat1_A(\chi))^3 = 0. $$
Consequently,
\begin{equation*}\label{e:Re}
  \sum_{\chi\ne1} \Re((z(\chi))^3)
                  = \Re\Big(\sum_{\chi\ne1}(-\hat1_A(\chi))^3\Big)=\alp^3,
\end{equation*}
and since $\Re(z)\ge 0$ implies $\Re(z^3)\le|z|^2\,\Re(z)$ (as one can easily
verify), it follows that
\begin{equation*}\label{e:Re}
  \sum_{\chi\ne1} |z(\chi)|^2\Re(z(\chi)) \ge \alp^3.
\end{equation*}
Comparing this to
  $$ \sum_{\chi\ne 1}|z(\chi)|^2=\alp(1-\alp) $$
(which is an immediate corollary of the Parseval identity), we conclude that
there exists a non-principal character $\chi$ such that
\begin{equation}\label{e:Rsmall}
  \Re(z(\chi)) \ge \frac{\alp^2}{1-\alp}.
\end{equation}
In view of $\alp>0.3$, it follows that
$\Re(-\hat1_A(\chi)\zet(\chi))>\frac9{70}$.

Replacing $\chi$ with the conjugate character, if needed, we can assume that
$\zet(\chi)=1$ or $\zet(\chi)=\exp(2\pi i/3)$. Let $F:=\ker\chi$, fix
$e\in\Zn$ with $\chi(e)=\exp(2\pi i/5)$, and for each $i\in[0,4]$, let
$\alp_i$ denote the density of $A-ie$ in $F$. By Propositions~\refp{1}
and~\refp{2},
we can assume that $\max\{\alp_i\colon i\in[0,4]\}<0.5$, and then by
Proposition~\refp{3} we can assume that there is at most one index
$i\in[0,4]$ with $\alp_i>0.4$; that is, of the five conditions
 $\alp_i\le 0.4\ (i\in[0,4])$, at most one may fail to hold and must be
relaxed to $\alp_i<0.5$. We show that these assumptions are inconsistent
with~\refe{Rsmall}. To this end, we consider two cases.

\smallskip
\noindent
Case (i): $\zet(\chi)=1$. In this case we have
\begin{equation}\label{e:lopt1}
  \alp_0+\alp_1\cos(2\pi/5)\longp\alp_4\cos(8\pi/5)
                                         = 5 \Re(\hat1_A(\chi)) < -\frac9{14}.
\end{equation}
For each $k\in[0,4]$, considering $\alp_0\longc\alp_4$ as variables, we now
minimize the left-hand side of \refe{lopt1} under the constrains
\begin{gather}
  \alp_0\longp\alp_4\ge 1.5, \label{e:const1} \\
  \alp_k\in[0,0.5], \label{e:const2}
\intertext{and}
  \alp_i\in[0,0.4]\ \text{for all $i\in[0,4],\ i\ne k$}. \label{e:const3}
\end{gather}
This is a standard linear optimization problem which can be solved precisely,
and computations show that for every $k\in[0,4]$, the smallest possible value
of the expression under consideration exceeds $-9/14$. This rules out
Case~(i).

\smallskip
\noindent
Case (ii): $\zeta(\chi)=\exp(2\pi i/3)$. In this case we have
\begin{equation}\label{e:lopt2}
  \sum_{j=0}^4
           \alp_j\cos\lpr 2\pi \left( \frac13+\frac j5\right) \rpr
                       = 5 \Re(\hat1_A(\chi)\exp(2\pi i/3)) < -\frac9{14}.
\end{equation}
Minimizing the left-hand side of \refe{lopt2} under the
constrains~\refe{const1}--\refe{const3}, we see that its minimum is larger
than $-9/14$. This rules out Case~(ii), completing the proof of
Theorem~\reft{mainn}.

\section{From ${\bf t}_\rho^+(G)$ to $\N_k(G)$}\label{s:kl}

In Section~\refs{intro}, we mentioned the close relation between the quantity
$\N_k(G)$ and an invariant introduced in~\refb{kl}. Denoted by
$\bt^+_\rho(G)$ in \refb{kl}, this invariant was defined for integer $\rho\ge
1$ and a finite abelian group $G$ to be the largest size of an aperiodic
generating subset $A\seq G$ such that $(\rho-1)(A\cup\{0\})\ne G$ and $A$ is
maximal under this condition. It was shown in \refb{kl} that
$\bt^+_\rho(G)=0$ if $\rho>\diam^+(G)$, while otherwise $\bt^+_\rho(G)$ is
the largest size of an aperiodic subset $A\seq G$ satisfying
$(\rho-1)(A\cup\{0\})\ne G$ and maximal under this condition. Our goal in
this section is to prove the following simple lemma allowing one to
``translate'' the results of \refb{kl} into our present
Theorems~\reft{kl2528}--\!\reft{kl27}.
\begin{lemma}
For any finite abelian group $G$ and integer $k\ge 1$, we have
\begin{equation}\label{e:trans}
  \bt_{k+1}^+(G) = \N_k(G),
\end{equation}
except if $|G|$ is prime and $k\ge |G|-1$, in which case $\bt^+_{k+1}(G)=0$
and $\N_k(G)=1$.
\end{lemma}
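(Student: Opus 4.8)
The statement compares two maximal-size aperiodic-set invariants: $\bt^+_{k+1}(G)$, built from the condition $k(A\cup\{0\})\ne G$, and $\N_k(G)$, built from the condition $kA\ne G$. The only structural difference is whether $0$ is adjoined to $A$ before taking the $k$-fold sumset, so the proof is essentially a careful bookkeeping exercise. I would first recall from \refb{kl} the reduction already quoted in this section: for $k+1\le\diam^+(G)$ the quantity $\bt^+_{k+1}(G)$ equals the largest size of an aperiodic $A\seq G$ with $k(A\cup\{0\})\ne G$ that is maximal under this condition (and $\bt^+_{k+1}(G)=0$ when $k+1>\diam^+(G)$). So after this reduction both invariants are ``largest aperiodic set maximal subject to a non-covering condition'', and the task is to compare the two non-covering conditions $kA\ne G$ versus $k(A\cup\{0\})\ne G$.

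**Key steps.** The heart of the matter is the following dichotomy for an aperiodic set $A\seq G$. If $0\in A$, then $kA=k(A\cup\{0\})$, so the two conditions coincide verbatim, and likewise the two maximality conditions coincide (adding $g\notin A$ to $A$ keeps $0$ inside). If $0\notin A$, then $kA\seq k(A\cup\{0\})$, and I claim that for a set that is \emph{maximal} subject to $kA\ne G$ we must in fact have $0\in A\cup$-issues handled as follows: either $A\cup\{0\}$ already violates $k(A\cup\{0\})= G$, in which case $A$ was not maximal for the $kA\ne G$ problem unless $0\in A$; or $k(A\cup\{0\})=G$, and then $A$ witnesses $\N_k$ but not the $\bt^+$-condition. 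I would organize this as: (1) show every set counted by $\bt^+_{k+1}(G)$ contains $0$ up to translation — more precisely, if $A$ realises $\bt^+_{k+1}(G)>0$ then $|A|=|A\cup\{0\}|$ or a translate does the job, so we may take $0\in A$ and then $A$ is admissible for $\N_k$, giving $\N_k(G)\ge\bt^+_{k+1}(G)$; (2) conversely, given $A$ realising $\N_k(G)$, note $kA\ne G$ is translation-invariant so translate to get $0\in A$, whence $k(A\cup\{0\})=kA\ne G$ and maximality transfers, giving $\bt^+_{k+1}(G)\ge\N_k(G)$. This yields \refe{trans} whenever $k+1\le\diam^+(G)$, i.e.\ by \refe{diam} whenever $\sum(m_i-1)\ge k+1$.

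**The exceptional case.** When $k+1>\diam^+(G)$ we have $\bt^+_{k+1}(G)=0$ by definition, so the equality \refe{trans} can only survive if $\N_k(G)=0$ too, and here I would invoke Theorem~\reft{kl24}: for $k\ge\diam^+(G)$ one has $\N_k(G)=0$ if $|G|$ is composite and $\N_k(G)=1$ if $|G|$ is prime. The borderline value $k=\diam^+(G)-1$, i.e.\ $k+1=\diam^+(G)$, is covered by step (1)--(2) above (the reduction from \refb{kl} still applies at $\rho=\diam^+(G)$). That leaves exactly $k\ge\diam^+(G)$, and since $\diam^+(G)=|G|-1$ precisely when $G$ is cyclic of prime order (and $\diam^+(G)<|G|-1$ otherwise, so ``composite'' forces $k\ge\diam^+(G)$ to be compatible only with $\N_k=0$), the only place \refe{trans} fails is $|G|$ prime with $k\ge|G|-1$, where $\bt^+_{k+1}(G)=0\ne1=\N_k(G)$ — exactly the stated exception.

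**Main obstacle.** The only subtle point is step (1): arguing that a set realising $\bt^+_{k+1}(G)$ may be assumed to contain $0$, or equivalently that adjoining $0$ to an optimal aperiodic set does not increase its size past the bound. This needs the maximality clause: if $0\notin A$ and $A$ is maximal subject to $k(A\cup\{0\})\ne G$, I should check whether $A\cup\{0\}$ is still aperiodic and still satisfies the non-covering condition — it does, since $k((A\cup\{0\})\cup\{0\})=k(A\cup\{0\})\ne G$, and aperiodicity of a set containing $0$ is not automatic but follows because $\pi(A\cup\{0\})\seq A\cup\{0\}$ and any nonzero period would have to be a subgroup contained in $A\cup\{0\}$, which a short argument using $k\ge1$ and $\diam^+$ rules out at optimal size. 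Once that is nailed down, everything else is routine translation-invariance and transfer of maximality, and the exceptional case is just a citation of Theorem~\reft{kl24} together with the formula \refe{diam} for $\diam^+$.
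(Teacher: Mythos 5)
Your reduction for $k<\diam^+(G)$ is sound and essentially matches the paper's: the condition $kA\ne G$ is translation-invariant, so one may pin $0\in A$ for the $\N_k$ side; on the $\bt^+$ side the maximality clause (taking $g=0$) forces $0\in A$ directly. Your ``Main obstacle'' paragraph is slightly confused, though: you worry about whether $A\cup\{0\}$ is aperiodic, but that worry is unnecessary. Maximality for both invariants is with respect to the non-covering condition alone (not aperiodicity), so as soon as $A\cup\{0\}$ satisfies $k(A\cup\{0\}\cup\{0\})\ne G$, the maximality of $A$ is contradicted by the proper superset $A\cup\{0\}$ regardless of whether that superset is periodic. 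Also, the parenthetical claim that $\diam^+(G)=|G|-1$ precisely when $G$ is cyclic of prime order is false ($\diam^+(\Z_m)=m-1$ for every $m$); fortunately you don't actually use it.

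The genuine gap is the exceptional case. You handle $k\ge\diam^+(G)$ by invoking Theorem~\reft{kl24}, which asserts $\N_k(G)=1$ if $|G|$ is prime and $\N_k(G)=0$ if $|G|$ is composite in that regime. But Theorem~\reft{kl24} is exactly one of the statements that this lemma is meant to justify: \refb{kl} proves facts about $\bt^+_\rho(G)$, and in the range $\rho=k+1>\diam^+(G)$ that invariant is identically $0$, so \cite[Theorem~2.4]{b:kl} cannot by itself distinguish $\N_k(G)=0$ from $\N_k(G)=1$. Citing \reft{kl24} here is therefore circular. The paper instead argues directly: take a maximal aperiodic $A$ with $kA\ne G$ (if none exists, $\N_k(G)=0=\bt^+_{k+1}(G)$), translate so $0\in A$, observe $k(A\cup\{0\})=kA\ne G$ forces $A$ to be non-generating, so $H:=\langle A\rangle<G$ is proper; maximality gives $A=H$ a maximal subgroup, and aperiodicity of a subgroup forces $H=\{0\}$, whence $G$ has prime order and $\N_k(G)=1$. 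You need some version of this direct argument in place of the citation.
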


\begin{proof}
We show that~\refe{trans} holds true unless $k\ge\diam^+(G)$ and $|G|$ is
prime; the rest follows easily.

Let $\cG$ denote the set of all aperiodic subsets $A\seq G$, and let $\cG_0$
be the set of all aperiodic subsets $A\seq G$ with $0\in A$.

Since translating a set $A\subseteq G$ affects neither its periodicity, nor
the property $kA=G$, we have
  $$ \N_k(G) = \max \{ |A|\colon A\in\cG_0,\ kA\neq G,
                     \ k(A\cup\{g\})=G\ \text{for each}\ g\in G\stm A \}. $$
As a trivial restatement,
\begin{multline}
  \N_k(G) = \max \{ |A|\colon A\in\cG_0,\ k(A\cup\{0\})\neq G, \label{e:Nk} \\
       \ k(A\cup\{0\}\cup\{g\})=G\ \text{for each}\ g\in G\stm A \}.
\end{multline}
However, letting $g=0$ shows that the conditions
  $$ k(A\cup\{0\})\ne G\ \text{and}
         \ k(A\cup\{0\}\cup\{g\})=G\ \text{for each}\ g\in G\stm A $$
automatically imply $0\in A$. Thus, in \refe{Nk}, the assumption $A\in\cG_0$
can be replaced with $A\in\cG$, meaning that $\N_k(G)$ is the largest size of
an aperiodic subset $A\seq G$ satisfying $k(A\cup\{0\})\ne G$ and maximal
under this condition; consequently, taking into account the discussion at the
beginning of this section, if $k<\diam^+(G)$, then $\N_k(G)=\bt^+_{k+1}(G)$.

Consider now the situation where $k\ge\diam^+(G)$. In this case
$\bt^+_{k+1}(G)=0$, and by the definition of $\diam^+(G)$, for any generating
subset $A\seq G$ we have $k(A\cup\{0\})=G$. Suppose that $A\in\cG$ satisfies
$kA\ne G$ and is maximal subject to this condition. (If such sets do not
exist, then $\N_k(G)=0=\bt^+_{k+1}(G)$.) Translating $A$ appropriately, we
can assume that $0\in A$, and then $k(A\cup\{0\})=kA\ne G$. It follows that
$A$ is not generating; that is, $H:=\<A\>$ is a proper subgroup of $G$.
Furthermore, the maximality of $A$ shows that $A=H$ is a maximal subgroup,
and aperiodicity of $A$ gives $A=H=\{0\}$. Therefore $G$ has prime order.
\end{proof}

\vfill

\bigskip

\end{document}